\xpatchcmd{\paragraph}{\normalfont}{{\normalfont\itshape}}{}{}
\newtheorem{assumption}{Assumption}[section]
\newtheorem{theorem}{Theorem}[section]
\newtheorem{lemma}{Lemma}[section]
\newtheorem{remark}{Remark}[section]
\theoremstyle{definition}
\newtheorem{definition}{Definition}[section]
\newcommand{\lp}{\left(}
\newcommand{\rp}{\right)}
\newcommand{\der}{\partial}
\newcommand{\R}{\mathbb{R}}      
\newcommand{\N}{\mathbb{N}}      
\newcommand{\Flder}{\rightarrow}
\newcommand{\dd}{\mathrm{d}}     
\renewcommand{\Lc}{\mathcal{Lc}^{\text{con}}}
\newcommand{\Lf}{\mathcal{L}^{\text{fra}}}
\renewcommand{\L}{\mathcal{L}}
\DeclareMathOperator{\diag}{diag}
\addspace\printfield{pages}
\space\printfield{series}
\begin{document}
\title{On Runge-Kutta convolution quadrature based fractional variational integrators}

\begin{abstract}
Lagrangian systems subject to fractional damping can be incorporated into a variational formalism. 
The construction can be made by doubling the state variables and introducing fractional derivatives \cite{JiOb2}. The main objective of this paper is to use the Runge–Kutta convolution quadrature (RKCQ) method for approximating fractional derivatives, combined with higher order Galerkin methods in order to derive fractional variational integrators (FVIs). We are specially interested in the CQ based on Lobatto IIIC. Preservation properties such as energy decay as well as convergence properties are investigated numerically and proved for 2nd order schemes. The presented schemes reach 2nd, 4th and 6th  accuracy order. A brief discussion on the midpoint fractional integrator is also included.
\end{abstract}

\subjclass[2010]{26A33,37M99,65P10,74H15,70H25,70H30.}

\keywords{RK convolution quadrature. Restricted Hamilton's principle. Fractional operators. Fractional variational integrators.}

\author{Khaled Hariz Belgacem}
\email{hariz@math.upb.de}
\address{Department of Mathematics, University of Paderborn, Warburger Straße 100, 33098 Paderborn, Germany}

\author{Sina Ober-Bl\"obaum}
\email{sinaober@math.upb.de}
\address{Department of Mathematics, University of Paderborn, Warburger Straße 100, 33098 Paderborn, Germany}

\email{fjimenez@ind.uned.es}
\author{Fernando Jim\'enez}
\address{Departamento de Matemática Aplicada I, ETSII, Universidad Nacional de Educación a Distancia (UNED)
c. Juan del Rosal 12, 28040, Madrid, Spain.}

\maketitle

\section{Introduction}
Differential equations are extensively used to model many phenomena in science and engineering, however, they often might not be capable in describing systems having long-term memory property due to their local nature. For instance, phenomena like anomalous diffusion, deformations, viscoelastic materials \cite{Bagley}, biological processes \cite{Magin1, Magin2} and other applications in different fields 
\cite{Bonilla, Hilfer} require models that incorporate historical effects.   In this context, fractional differential equations, which involve fractional derivatives, provide a powerful extension. Their nonlocal nature allows the current behavior of a system to depend not only on the present but also on its entire past history, leading to more accurate and realistic representations of complex phenomena. 
We refer to \cite{Oldham,TheBook,Ignor,TheBook2} for a comprehensive overview of the fractional calculus theory. Although fractional calculus has many applications, developing analytical solutions for fractional differential equations remains challenging. Some modern applications of fractional calculus to physical systems, complex media, quantum dynamics and more can be found in \cite{Tarasov}. Applications in mechanics such as non-local elasticity,
viscoelasticity models, heat conduction diffusion problems are treated in \cite{chala}.
See also \cite{HandbookCon,HandbookEng} for applications in control, engineering, life and social sciences. 

Recently, fractional calculus has emerged as a powerful tool in the study of dynamical systems. Its applications extend across a wide range of models, including dissipative and conservative systems as well as constrained dynamical systems \cite{Riewe,Riewe2,Klimek,Agra,Baleanu}. Of particular interest in this work are fractional variational problems, which extend the classical variational framework to the fractional setting. Such problems naturally give rise to fractional equations of motion, namely the fractional Euler–Lagrange equations, which generalize their classical counterparts.

Classical variational principles fail to capture dissipative systems \cite{Bauer}, fractional calculus provides a natural extension of the variational framework, enabling the dissipation to be fully treated within the framework of variational principles.
This topic has been discussed by several authors \cite{Riewe, Cresson2,Cresson3, JiOb1,JiOb2}. In particular, Lagrangian systems subject to linear  damping with constant coefficients have been discussed in \cite{JiOb2} by means of restricted Hamilton's principle. This approach shows that the dynamical equations are sufficient but not necessary conditions for characterizing the extremals of the action.

At the discrete level, the restricted Hamilton's principle gives rise to Fractional Variational Integrators by employing two different approximations: one for the conservative part and the other for the fractional one. A well known discretization for the conservative Lagrangian can be found in \cite{MaWe, LuGeWa}. 
Although several approaches exist for approximating the fractional part \cite{Lidia2015,BoXuHu,HandbookNum,Kaibook2016}, the chosen method must ensure that essential properties are preserved in the discrete setting, as in the continuous one.

In this context, FVIs have been derived in \cite{JiOb2} using the Grünwald-Letnikov approximation for the fractional derivatives which  leads to first-order accurate schemes. More recently, \cite{HaJiOb} proposed a generalization by employing convolution quadrature (CQ) \cite{Lubich2} for the fractional derivatives, which can in principle achieve approximations up to order $6$ with fractional backward difference (BDF) methods,  and Galerkin variational methods for the conservative part \S\ref{HO-Action}. Nevertheless, the overall accuracy of FVIs remains limited to second order. This restriction due to saturation effects of BDFCQ. In addition, BDFCQ neglects the inner nodes that used in higher-order approximations of the conservative action, see \cite{HaJiOb} more details. This limitation can be effectively addressed by employing Runge–Kutta convolution quadrature (RKCQ), which naturally avoids the saturation effects of BDFCQ and retains higher-order accuracy.

The aim of this paper is to improve the accuracy of FVI by employing RKCQ, see \cite{LuOs, BaLu,BaLo} for a deeper discussion about the construction and the analysis concerning Radau IIA or Lobatto IIIC methods, and \cite{LeMa} for Gauss methods. This large class of methods enables higher order approximation for fractional derivatives. We derive Lobatto IIIC FVI and the validity is tested by presenting analytically solvable models and comparing the simulation results with their exact solutions.

\paragraph{Main contributions} In this paper, we present new fractional variational integrators derived from a strategy analogous to \cite{JiOb2} but employing different discretization schemes. The primary innovation lies in our treatment of the fractional part, leading to the following main contributions:
\begin{enumerate}
\item We establish the semigroup and  asymmetric integration by parts properties for a general RKCQ.
\item We demonstrate how to use CQ based on Runge-Kutta Lobatto IIIC methods  for the approximation of fractional part of our Lagrangian.
\item We prove the approximation order for a specific example with quadratic
Lagrangian.
\item We exanimate numerically the accuracy of our method for the Bagley-Torfik equation which involves the half-derivative.
\item  We establish the midpoint RKCQ, which gives rise to the midpoint fractional integrators (second order accuracy).
\end{enumerate}

\paragraph{Structure of the paper} 
Section \ref{Disc_Lag_Mech} contains a brief summary of Lagrangian formalism and  variational integrators, which are used throughout this work. In Section \ref{FracIntegrals}, we review some of the standard definitions and properties on fractional operators and  their approximations. In particular, Subsection \ref{sec:RKCQ} introduces the notion of Runge-Kutta based convolution quadrature by Lubich \cite{LuOs}.
Section \ref{sec:restrectedCOV} presents the continuous restricted Hamilton's principle, followed by an explanation of how RKCQ can be applied in the discrete setting to derive discrete dynamics. We state in Theorem \ref{thm:maintheorem} a new scheme associated to Lobatto IIIC (Table \ref{tab:Lobatto}). The energy decay and accuracy of the obtained schemes are numerically examined and, in particular, theoretically proven to be of order 2. A brief discussion of midpoint CQ is also included. Section \ref{conclusion} concludes the paper.

\section{Discrete Lagrangian Mechanics}\label{Disc_Lag_Mech}

In this section, we give a brief summary of the Lagrangian mechanical system from both continuous and  discrete points of view. 

\subsection{Lagrangian Hamilton's Principle}
Consider mechanical system with a $d$-dimensional configuration manifold $Q$, whose motion is described by a trajectory in the tangent bundle $TQ$ associated to $Q$, parameterized by positions 
$q(t)\in Q$ and   velocities  $\dot q(t)\in T_{q(t)}Q$. The Lagrangian system is characterized by a $C^2$-Lagrangian $L:TQ\Flder\R$, commonly given by  $L(q,\dot q)=T(q,\dot q)-V(q)$, where  $T$ and $U$ denote the kinetic and potential energies, respectively. Consider 
the action $\mathcal{L}:C^2([0,T],Q)\Flder\R$ of a mechanical system, given by
\begin{equation}\label{ContAc}
\mathcal{L}(q)=\int_0^TL(q(t),\dot q(t))\,\dd t.
\end{equation}
In Lagrangian mechanics, the trajectory is characterized by Hamilton's principle.  That is, the trajectory $q$, with fixed points $q(0)$ and $q(T)$, is determined by saying the action is stationary, namely
\[
\delta\mathcal{L}(q)=0.
\]
for all arbitrary variations that satisfy $\delta q(0)=\delta q(T)=0$. Necessary and sufficient conditions for the extremals lead to the Euler-Lagrange equations
\begin{equation}\label{EL}
\frac{\dd}{\dd t}\lp\frac{\der L}{\der\dot q}(q(t),\dot q(t))\rp-\frac{\der L}{\der q}(q(t),\dot q(t))=0.
\end{equation}
These are the equations of motion for conservative system. See \cite{AbMa,MaWe,MarRat} for more details.

\subsection{Discrete Lagrangian Hamilton's Principle}\label{Disc}
Discrete mechanics is essentially a discrete formulation of geometric mechanics that preserves its underlying structure within a numerical framework, leading to what are also called \textit{structure-preserving methods}.  Variational integrators (VIs) are a discrete formulation of the Euler–Lagrange equation \eqref{EL} and belong to a family of structure-preserving methods. Unlike standard discretizations of differential equations,  the construction of VIs starts by discretizing  the state space $TQ$ and the action $\mathcal{L}$ \cite{MaWe}. More concretely, consider a time discretization $t_k=\left\{kh\right\}_{k=0}^N$ of the interval $[0,T]$ where $h=T/N$ is the time step. One approximates the continuous trajectory $q(t)$ by a discrete sequence $q_d:=\left\{ q_k\right\}_{0:N}\in Q^{N+1}$ at times $t_k$, i.e.,~$q_k\approx q(t_k)$. The discrete Lagrangian $L_d$ has to be defined in such a way as to approximate the action over a short time interval.  Thus, define $L_d:Q\times Q\Flder\R$ based on two neighboring configurations $q_k$ and $q_{k+1}$ as
\begin{equation}\label{TwoPointsApprox}
L_d(q_k,q_{k+1})\approx \int_{t_k}^{t_{k+1}}L(q(t),\dot q(t))\,\dd t.
\end{equation}
Furthermore, the discrete action sum $\mathcal{L}_d:Q^{N+1}\Flder\R$ approximates the continuous one \eqref{ContAc} as
\[
\mathcal{L}_d(q_d)=\sum_{k=0}^{N-1}L_d(q_k,q_{k+1}).
\]
The discrete variational principle is designed to mirror the continuous one. Namely, the trajectory $q_d=\{q_k\}_{k=0}^N$, with fixed points $q_0$ and $q_N$, is determined by saying the action sum is stationary
\[
\delta \mathcal{L}_d(q_d)=0,
\]
for arbitrary variations $\delta q_0=\delta q_N=0$, leading to the discrete Euler-Lagrange (DEL) equation 
\begin{equation}\label{DEL}
D_2L_d(q_{k-1},q_{k})+D_1L_d(q_k,q_{k+1})=0,\qquad k=1,\ldots, N-1,
\end{equation}
where $D_i$ refers to the partial derivative with respect to the $i$-th argument. This scheme is called variational integrator and its flow is defined implicitly by the map  $\Phi_h:Q\times Q\Flder Q\times Q$, $(q_{k-1},q_k)\mapsto (q_k,q_{k+1})$. 
The resulting integrator \eqref{DEL} belongs to the class of structure-preserving methods \cite{LuGeWa,Raff2023,SerFer2016}, meaning that it preserves, or nearly preserves, the invariants of the underlying continuous system such as, the symplectic structure, the total energy and, the momentum in presence of symmetries. Despite this advantage, this construction yields at most a second-order accurate scheme. The purpose of the following paragraph is to introduce a family of variational methods of arbitrary order. 

\subsection{Galerkin Variational Methods}\label{HO-Action}
We now consider another class of methods to generate variational integrators, known as Galerkin methods. This approach involves locally interpolating the trajectories on a finite-dimensional basis and evaluating the action integral using a suitable quadrature method \cite{HaLe13,MaWe,SinaSaake}.

\paragraph{(1) Trajectories space} We replace the infinite-dimensional function space
$$\mathcal{C}([t_k,t_{k+1}],Q)=\left\{ q:[t_k,t_{k+1}]\Flder Q\,|\, q(t_k)=q_k,\,q(t_{k+1})=q_{k+1} \right\},$$
by a finite-dimensional one  $\mathcal{C}^s([t_k,t_{k+1}],Q)\subset \mathcal{C}([t_k,t_{k+1}],Q)$, where $\mathcal{C}^s([t_k,t_{k+1}],Q)$ denotes the space of polynomials of degree $s$.  Given $s+1$ control points $0=d_0<d_1<\cdots<d_{s-1}<d_s=1$ and $s+1$ configurations $q_k=(q_k^0,q_k^1,\ldots,q_k^{s-1},q_k^s)$, which satisfy $q_k^0=q_k$ and $q_k^{s}=q_{k+1}$. Thus, there exists a basis $\{\ell_\nu\}_{\nu=0}^s$, e.g. the Lagrange polynomials, for which an element $q_d\in \mathcal{C}^s([t_k,t_{k+1}],Q)$ can be uniquely written as   a  linear combination of $\{\ell_\nu\}_{\nu=0}^s$ 
\begin{equation}\label{Polynomials}
q_d(t;k):=\sum_{\nu=0}^sq_k^{\nu}\,\ell_{\nu}\left(\frac{t}{h}\right),
\end{equation}
 such that $\ell_{\nu}(d_i)= \delta_{\nu i}$ and therefore $q_d(d_\nu h;k)=q_k^\nu$ according to \eqref{Polynomials}.  A trajectory $q_d : [0, T ] \to Q$, defined over the entire interval, is expressed on each sub-interval using the local interpolation $q_d$. This gives, for all $t \in [t_k , t_{k+1}]$
\[q_d|_{[t_k , t_{k+1}]} (t) = q_d (t;k).\]
To ensure the continuity of $q_d$ at the main nodes, we write for all $0 < k < N$ the condition
\[q_d (t_k,k) = q_d (t_k,k-1)=q_k.\]

\paragraph{(2) Quadrature for the action integral} We first replace the curve $q(t)$ and the velocity $\dot q(t)$ by their polynomial counterparts $q_d(t;k)$, $\dot q_d(t;k)$ on the action as
\begin{equation}\label{QuadforL_d}
    \int_{t_k}^{t_{k+1}}L(q_d(t;k),\dot q_d(t;k))\,\dd t,\quad k=0,\ldots, N-1.
\end{equation}
Then, a quadrature rule $(b_i,c_i)_{i=1}^r$ with $c_i\in[0,1]$ is applied to \eqref{QuadforL_d} on the same time interval yields
\begin{equation}\label{DiscLag}
L_d(\{q_k\})\equiv L_d(q_k^0, \ldots,q_k^s):=h\sum_{i=1}^rb_iL(q_d(c_i\,h;k),\dot q_d(c_i\,h;k))\approx \int_{t_k}^{t_{k+1}}L(q(t),\dot q(t))\,\dd t.
\end{equation}
Following \S\ref{Disc}, the action sum can be defined from \eqref{DiscLag} by
\[
\mathcal{L}_d(q_d)=\sum_{k=0}^{N-1}L_d(\{q_k\}),
\]
and hence the discrete Hamilton's principle leads to the discrete Euler-Lagrange equations
\[
\begin{split}
D_{s+1}L_d(q_{k-1}^0,\ldots,q_{k-1}^s)+D_1L_d(q_{k}^0,\ldots,q_{k}^s)=0,\\
D_iL_d(q_{k}^0,\ldots,q_{k}^s)=0,\quad  \forall\,i=2,\ldots,s;
\end{split}
\]
for $k=1,\ldots,N-1$ where $D_iL_d(q_k^{0},\ldots,q_k^{s}):=\frac{\partial L_d(\{q_k\})}{\partial q_k^{i-1}}$. See \cite{MaWe, SinaSaake} for further details.

Variational integrators have been widely used for conservative systems \cite{HaLe13,Leok2011,SinaSaake,Ca13}, nonconservative systems \cite{SinaVerm,ShPaWo}, optimal control problems \cite{Ca14} and multirate systems \cite{SinaWeGaLe,WeSinaLe,LeSina}.

The discrete variational principle will automatically share the accuracy of the Lagrangian to the resulting DEL equation by means of what is called {\it local variational order} \cite{MaWe}, that is,  \textit{to construct  a geometric integrator of order $r$, focus on creating an approximation to the action integral of order $r+1$}. Depending on the degree of the interpolating polynomial and choice of the quadrature rule, one obtained  VIs of different computational complexity and accuracy.  Galerkin variational methods with polynomial interpolation of degree $s$ and quadrature of order $s$ were shown to converge with order at least $s$ \cite{HaLe13} and the optimal order is generally unknown. Numerical studies \cite{SinaSaake} indicate convergence of order $\min(2s,r)$ for Lobatto and Gauss quadrature rules.

\section{Fractional Operators and their Approximations}\label{FracIntegrals}

\subsection{Fractional Integrals and Fractional Derivatives}
In this section, we collect the definitions of fractional operators and their main properties without proofs. For more details see  \cite{TheBook2,TheBook}.

\begin{definition}[Fractional integrals]
Let $f:[0,T]\Flder\R$ be a continuous and integrable function. The left and right Riemann–Liouville fractional integrals of order $\alpha\in \R^+$ are defined respectively in the following way:   
\begin{subequations}\label{RLInt}
\begin{align}
J^{\alpha}_{-}f(t)&=\frac{1}{\Gamma(\alpha)}\int_0^t(t-\tau)^{\alpha-1}f(\tau)\,\dd\tau, \quad t\in (0,T],\label{RLInt:a}\\
J^{\alpha}_{+}f(t)&=\frac{1}{\Gamma(\alpha)}\int_t^T(\tau-t)^{\alpha-1}f(\tau)\,\dd\tau,\quad t\in [0,T), \label{RLInt:b}
\end{align}
\end{subequations}
where $\Gamma$ is the Euler's Gamma function and for $\alpha=0$,  we set $J^{0}_{-}f=J^{0}_{+}f=f$.
\end{definition}
The fractional integrals enjoy two significant properties which are crucial in application,   the {\it integration by parts} and  the {\it semigroup property}\footnote{Formula \eqref{IntegrationByParts} holds true  for $\alpha>0$, $f \in  L^p (a, b),\, g\in L^q (a, b)$ with $p \geq 1,\, q \geq 1$ and
$\frac{1}{p} + \frac{1}{q} \leq 1 + \alpha$, while formula \eqref{Aditive} holds almost everywhere in $[a,b]$ for $\alpha,\beta>0$ and $f \in  L^p (a, b)$ with
$1\leq p \leq\infty $.} \cite[Lemmas 2.3 and 2.7]{TheBook2}
\begin{subequations}\label{FracIntProperties}
\begin{align}
\int_0^Tf(t)\left(J^{\alpha}_{\sigma}g\right)(t)\, \dd t&=\int_0^T g(t) \left(J^{\alpha}_{-\sigma}f\right)(t)\, \dd t,\quad  \sigma\in\left\{-,+\right\}, \label{IntegrationByParts}\\
J_{\sigma}^{\alpha}J_{\sigma}^{\beta}f&=J_{\sigma}^{\alpha+\beta}f,\quad \alpha,\beta>0. \label{Aditive}
\end{align}
\end{subequations}

\begin{definition}[Fractional derivatives]
The left and right Riemann–Liouville fractional integrals of order $\alpha\in \R^+$  are defined respectively in the following way: 
\begin{subequations}\label{RLDer}
\begin{align}
D^{\alpha}_{-}f(t)&=\frac{1}{\Gamma(n-\alpha)}\left(\frac{\dd}{\dd t}\right)^n\int_0^t(t-\tau)^{n-\alpha-1}f(\tau)\,\dd\tau,\quad\,t\in (0,T],\label{RLDer:a}\\
D^{\alpha}_{+}f(t)&=\frac{1}{\Gamma(n-\alpha)}\left(-\frac{\dd}{\dd t}\right)^n\int_t^T(\tau-t)^{n-\alpha-1}f(\tau)\,\dd\tau,\quad t\in [0,T), \label{RLDer:b}
\end{align}
\end{subequations}
with $n \in \N$, $n -1\leq \alpha < n$.
\end{definition}
In particular, for $\alpha=n\in\N$, we recover the usual derivatives, namely
\[D^{n}_{-}f(t)=f^{(n)}(t),\qquad D^{n}_{+}f(t)=(-1)^nf^{(n)}(t).\]
In the variational problems, two important properties for fractional derivatives\footnote{Formula \eqref{IntegrationByParts2} holds true  for $\alpha>0$, $f \in  I_-^\alpha(L^p),\, g\in I_+^\alpha(L^q)$ with $p \geq 1,\, q \geq 1$, and
$\frac{1}{p} + \frac{1}{q} \leq 1 + \alpha$, see \cite{TheBook} for the definition of $I_-^\alpha(L^p)$ and $I_+^\alpha(L^p)$.} playing a significant role:
\begin{subequations}\label{FracDerProperties}
\begin{align}
\int_0^Tf(t)\left(D^{\alpha}_{\sigma}g\right)(t)\, \dd t&=\int_0^T g(t) \left(D^{\alpha}_{-\sigma}f\right)(t)\, \dd t,\quad  \sigma\in\left\{-,+\right\}, \label{IntegrationByParts2}\\
D_{\sigma}^{\alpha}D_{\sigma}^{\alpha}f&=D_{\sigma}^{(2\alpha)}f,\quad \alpha>0,\label{Aditive2}
\end{align}
\end{subequations}
It is important to note that, unlike fractional integrals which satisfy the semigroup property, fractional derivatives do not in general exhibit such property unless additional conditions are imposed, see \cite[Theorem 2.13]{Kai} and \cite{TheBook} for further details and proofs.

\begin{remark}
Following \cite[Lemma 2.2]{TheBook2}, the existence of the fractional derivatives requires $f\in AC^{n}([0,T])$, the functional space as defined in \cite[Equation (1.1.7)]{TheBook2} or \cite[Definition 1.3]{TheBook}. In this case, the left Riemann–Liouville fractional derivative can be rewritten  as
\begin{equation}\label{caputo}
 D^{\alpha}_{-}f(t)=\sum_{k=0}^{n-1}\frac{f^{(k)}(0)}{\Gamma(1+k-\alpha)}t^{k-\alpha}+\frac{1}{\Gamma(n-\alpha)}\int_0^t(t-\tau)^{n-\alpha-1}f^{(n)}(\tau)\,\dd\tau. 
\end{equation}
The second term in the right hand-side is usually referred to as the \textnormal{left Caputo fractional derivative} which means that, the Riemann–Liouville and the Caputo derivatives are identical if and only if $f^{(k)}(a) = 0,\ k = 0, \ldots , n - 1$.
\end{remark}

\subsection{Runge-Kutta Convolution Quadratures}\label{sec:RKCQ}
The theory of convolution quadrature methods (CQ) was developed in \cite{Lubich1,Lubich2,Lubich3}. It provides a systematic framework to compute  the convolution integral 
\begin{equation}\label{ContConv}
\left(\kappa\ast f\right)(t):=\int_0^t\kappa(t-\tau)f(\tau)\,\dd\tau, \quad t>0
\end{equation}
efficiently without evaluating the convolution kernel $\kappa$, but instead through its Laplace transform $K(s)$ by combining quadrature rules with linear multistep methods. A particular case of convolution is the {\it retarded} fractional integral \eqref{RLInt:a} where the  kernel $\kappa$ is given by
\begin{equation}\label{ConvKer}
\kappa(t)=\kappa^{(\alpha)}(t)=\frac{t^{\alpha-1}}{\Gamma(\alpha)}.
\end{equation}
with the corresponding Laplace transform of the form
\begin{equation}\label{LaplaceTrans}
K^{(\alpha)}(s):=\mathscr{L}(\kappa)(s)=\int_0^{\infty}\frac{t^{\alpha-1}}{\Gamma(\alpha)}e^{-st}\, \dd t=\frac{1}{s^\alpha}.
\end{equation}

Building on this theory, subsequent extensions have applied CQ with Runge-Kutta methods \cite{LuOs, BaLu,BaLo}, which enable higher-order accuracy. In the present work, we adopt this Runge-Kutta-based approach in order to construct higher-order fractional variational integrators in \S\ref{sec:FVI}. 

\subsubsection{Runge-Kutta methods}
We first recall some notations of Runge-Kutta (RK) methods which will be used throughout the following sections. We refer to \cite{HaWa,Raff2023}. Consider a $r$-stage RK method described by the coefficient matrix $A=(a_{ij})_{i,j=1}^r\in\R^{r\times r}$, the vector of weights $\mathbf{b}=(b_1,\ldots,b_r)^\top\in\R^r$ and the vector of abcissae $\mathbf{c}=(c_1,\ldots,c_r)^\top\in[0,1]^r$, or by means of the standard Butcher tableau
$$\setlength{\tabcolsep}{20pt}
\renewcommand{\arraystretch}{1.5}\begin{array}{c|c}
\mathbf{c}   &  A  \\ \hline
     &  \mathbf{b} ^\top 
\end{array}.$$
Let the time step $h = T /N$, and the grid points $t_j = jh$, an $r$-stage RK method applied to ordinary differential equations
\[u'(t)=f(t,u(t)),\quad u(0)=u_0\]
is given by 
\begin{subequations}\label{eq:RKmethod}
\begin{align}
U_{ki}&= z_k +h\sum_{j=1}^r a_{ij}f(t_k+c_jh,U_{kj}),\quad j=1,\ldots,r\\
u_{k+1}&= z_k +h\sum_{j=1}^r b_{j}f(t_k+c_jh,U_{kj}).
\end{align}
\end{subequations}
The method \eqref{eq:RKmethod} has \textit{classical order} $p\geq 1$ and \textit{stage order} $q \leq p$, respectively, if for sufficiently smooth $f$ (see  \cite{HaWa})
\[\left|u_{k+1}-u_{k}\right|=\mathcal{O}(h^{p+1})\quad
\text{and}\quad\left|U_{kj}-u({t_k+c_jh})\right|=\mathcal{O}(h^{q+1}),\quad i=1,\ldots,r.\]
Let $I$ be the identity matrix in $\R^{r\times r}$ and $\mathds{1}=(1,\ldots,1)^\top$, 
the corresponding stability function is given by
\begin{equation}\label{StabFunc}
R(z)=1+z \mathbf{b} ^\top (I-z  A )^{-1}\mathds{1}.
\end{equation}

\begin{assumption}\label{assumption}
From now on, we make the following assumption:
\begin{enumerate}
\item The matrix $A$ is invertible.
\item  $R(iy)\leq 1$, for all real $y$.
\item $R(z)$ is analytic for $\mathrm{Re}(z)<1$.
\end{enumerate}
\end{assumption}

The two last assumptions guarantee $A$-stability of the method, in other words, the method is stable on the entire negative half-complex plane $\mathbb{C}^-$.
Important examples of RK methods satisfying these assumptions are Radau IIA (with order $p = 2r - 1$ and stage order $q = r$) and Lobatto IIIC methods (with order $p = 2r - 2$ and stage order $q = r-1$).

\subsubsection{Runge-Kutta Convolution Quadratures}
Based on such RK methods, one can construct specific convolution quadratures \cite{LuOs, BaLu,BaLo}.  To this end, we define time grids $\tau:=\{t_k+c_ih\}_{k=0, \ldots,N}^{i=1, \ldots, r}$ and $\mathbf{t}_k:=\{t_k+c_ih\}_{i=1, \ldots,r}$ with $t_k=kh$, $t_k^i=t_k+c_ih$ and let $f:[0,T]\Flder\R^d$. Consider the collection of vectors $f(\mathbf{t}_k)=\mathbf{f}_k=\{f_k^i=f(t_k^i)\}_{i=1,\ldots,r}=\left(f(t_k^1),\ldots,f(t_k^r)\right)^\top\in\mathbb{R}^{r\times d}$ for $k=0,\ldots,N$. Then, using the \textit{retarded} RKCQ methods, the approximation of the fractional integral $J_-^\alpha f(\mathbf{t}_k)$   is given by
\begin{equation}\label{RKConQua}
J_{-}^{\alpha}f(\mathbf t_k)\approx\mathcal{J}_{-}^{\alpha}f(\mathbf t_k):=\sum_{n=0}^{k}W_{k-n}^{(\alpha)}{\bf f}_{n}=\sum_{n=0}^{k}W_{k-n}^{(\alpha)}f({\bf t}_n)
\end{equation}
or explicitly
$$\mathcal{J}_{-}^{\alpha}{\bf f}_k= \sum_{n=0}^kW_{k-n}^{(\alpha)}\begin{pmatrix}
{f}^{1}_{n}\\
\vdots\\
{f}^{ r }_{n}
\end{pmatrix}.$$
Here, $K^{(\alpha)}$ is again as defined \eqref{LaplaceTrans} and the matrices $W_n^{(\alpha)}\in\R^{ r \times  r}$ are the  {\it convolution weights},  determined by the series expansion
\[
K^{(\alpha)}\lp\frac{\gamma(z)}{h}\rp=\sum_{n=0}^{\infty}W_n^{(\alpha)}z^n,
\]
where the matrix-valued function $\gamma(z)$, based on the underlying RK method, is given by 
\[
\gamma(z)=\lp  A +\frac{z}{1-z}\mathds{1}\,\mathbf b^\top \rp^{-1}= A ^{-1}-z A ^{-1}\mathds{1}\mathbf  b^\top  A ^{-1}.
\]
Using a different notation, which will be convenient later, the approximation at the stage time $t_k^i$ is defined by the $i$-th row of \eqref{RKConQua} and is be computed   as \footnote{Again, the convolution quadrature can equivalently  be defined as
\[
\mathcal{J}_{-}^{\alpha}\mathbf f_k=\sum_{n=0}^{k}W_{n}^{(\alpha)}f(\mathbf t_{k-n})
\]
}
\[
\mathcal{J}_{-}^{\alpha}f_k^i=\sum_{n=0}^{k} \sum_{j=1}^{r} \left[W_n^{(\alpha)}\right]_{j}^if_{k-n}^j, \quad i=1,\ldots, r.
\]
where $\left[W_n^{(\alpha)}\right]_{j}^i$ denotes the 
$(i,j)$-th entry of the weight matrix $W_n^{(\alpha)}$.
An efficient way to  compute the weights can be achieved by using fast Fourier transformation (FFT) as described in \cite{LuOs}. Following \cite[Section 5.4]{LehSaBook}, the convolution weights $W_n^{(\alpha)}$ can be represented as a Cauchy integral formula as follows

\begin{align}
    W_n^{(\alpha)} &=\frac{1}{2\pi i}\int_{|z|=\lambda}K^{(\alpha)}\left(\frac{\gamma(z)}{h}\right)z ^{-n-1} \dd z\nonumber\\
 &=\lambda^{-n}\int_{0}^1 K^{(\alpha)}\left(\frac{\gamma(\lambda e^{-2\pi i \theta})}{h}\right)e ^{2\pi i \theta n} \dd \theta,\quad 0<\lambda<1.\nonumber\\
 &\approx \frac{\lambda^{-n}}{N+1}\sum_{\ell=0}^N K^{(\alpha)}\left(\frac{\gamma(z_\ell)}{h}\right)z_\ell^{-n}
\quad \text{(trapezoidal rule)}\label{TR}
\end{align}
where $z_\ell=\lambda e^{-2\pi i\ell/(N+1)}$. The  following  algorithm  summarizes  the  basic  steps  for  computing the convolution weights
\begin{algorithm}{}
\begin{algorithmic}[1]
\State {\bf Initial data:} $N$, $h$, $\gamma(z)$, $K^{(\alpha)}(s)$ and quadrature points $z_\ell,\, \ell =0,\ldots,N$
 \For {$\ell= 0,\ldots, N$} 
\State {\bf Diagonalize} $\gamma(z_\ell)=V_\ell D_\ell V_\ell^{-1}$
\State {\bf Apply} $K^{(\alpha)}$: $K^{(\alpha)}\left(\frac{\gamma(z_\ell)}{h}\right)=V_\ell K^{(\alpha)}(D_\ell/h) V_\ell^{-1}$,
\State {\bf Use} the FFT in equation \eqref{TR}
\EndFor
\State {\bf Output:} $W_0^{(\alpha)},W_1^{(\alpha)},\ldots,  W_N^{(\alpha)}.$
\end{algorithmic}
\caption{Basic algorithm to compute the convolution weights}
\end{algorithm}

A good choice of $\lambda$ is $\lambda=\varepsilon^{1/(2N+1)}$ yields an error in $W_n^{(\alpha)}$ of size $\sqrt{\varepsilon}$; see \cite{Lubich2}. Another representation of the weights $W_n^{(\alpha)}$  can be found in  \cite{BaLo}, leading also to an efficient algorithm approximating fractional integrals.

Now, we state the  convergence order of the RKCQ \eqref{RKConQua} as shown in \cite[Theorem 5]{BaLuMar}. Here, we omit the regularity conditions on the general Laplace transform $K(s)$ for which the original theorem is established, since our focus is on $K^{(\alpha)}(s)$ \eqref{LaplaceTrans}, which alraedy satisfies these conditions.  Moreover, under suitable assumptions on the function $f$ and its derivatives, the convolution quadrature converges with order $\min( p, q + 1 - \mu )$.

\begin{theorem}\label{thm:RKCQ_convergence}
Let $K^{(\alpha)}$ be the Laplace transform  of the kernel which  is analytic in the half-plane $\mathrm{Re} s > \sigma > 0$, such that for
some real exponent $\mu$ and bounding factor $C_K(\sigma)> 0$, the operator norm is bounded as follows:
\begin{equation*}
    \left\|K^{(\alpha)}(s)\right\|\leq C_K(\sigma) |s|^\mu\quad \text{for all}\quad \mathrm{Re} s>\sigma.
\end{equation*}
Let a Runge-Kutta method which satisfies Assumption \ref{assumption}, $r> \max(p+\mu+1,p,q+1)$ and  $f\in C^r([0,T],\R)$, then there exist $\bar h>0$ and $C>0$ such that for $h\leq \bar h$ it holds
\begin{equation}
\left\|J^{\alpha}_{-}f(t_k^i)-\mathcal{J}_{-}^{\alpha}f(t_k^i)\right\|\leq C h^{\min(p,q+1-\mu)}\left(\left|f^{(r)}(0)\right|+\int_0^t\left|f^{(r+1)}(s)\right|\dd s\right),
\end{equation}
where $C$ depends on the constants $\bar h,T$ and the underlying RK method, but is independent of $h$ and $f$.
\end{theorem}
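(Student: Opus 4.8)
The plan is to obtain the estimate as a special case of the general Runge--Kutta convolution quadrature convergence theorem \cite[Theorem 5]{BaLuMar}. First I would verify that the specific symbol $K^{(\alpha)}(s)=s^{-\alpha}$ (principal branch) fulfils the structural hypotheses of that theorem: it is holomorphic on $\C\setminus(-\infty,0]$, hence in particular on every half-plane $\mathrm{Re}\,s>\sigma>0$, and there $|s|\geq\mathrm{Re}\,s>\sigma$ gives $\|K^{(\alpha)}(s)\|=|s|^{-\alpha}\leq|s|^{\mu}$ with $\mu=-\alpha$, $C_K(\sigma)=1$ (or $\mu=0$, $C_K(\sigma)=\sigma^{-\alpha}$). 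Since we further assume the underlying Runge--Kutta method satisfies Assumption \ref{assumption} and $r>\max(p+\mu+1,p,q+1)$, the abstract theorem then applies verbatim with exponent $\mu=-\alpha$, yielding the rate $\min(p,q+1-\mu)=\min(p,q+1+\alpha)$ and the asserted dependence of $C$ on $\bar h$, $T$ and the method but not on $h$ or $f$.

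For completeness I would also recall the mechanism behind \cite[Theorem 5]{BaLuMar}. In Lubich's operational calculus the exact value satisfies $\widehat{J^{\alpha}_{-}f}(s)=K^{(\alpha)}(s)\widehat f(s)$, while the RKCQ stage vectors have generating function $\sum_{k\geq0}\bigl(\mathcal J^{\alpha}_{-}f(\mathbf t_k)\bigr)z^{k}=K^{(\alpha)}\!\lp\gamma(z)/h\rp\widehat F(z)$ with $\widehat F(z)=\sum_{n\geq0}\mathbf f_n z^n$, using the generating relation defining the weights $W_n^{(\alpha)}$ and the contour representation \eqref{TR}. Writing both quantities as Cauchy integrals and subtracting, the error becomes a contour integral of the symbol mismatch against the transform of the data. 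One then splits $f$ into its Taylor polynomial of degree $r-1$ at $t=0$ plus a remainder $R$ with $R^{(j)}(0)=0$ for $j=0,\dots,r-1$ and $R^{(r)}=f^{(r)}$. On the polynomial part the order conditions of the method enter: the classical order $p$ governs the external update and the stage order $q$ the internal stages, while the growth exponent $\mu=-\alpha$ appears as a shift, so that $K^{(\alpha)}\!\lp\gamma(z)/h\rp$ reproduces the action of $K^{(\alpha)}(\partial_t)$ on monomials up to $\mathcal O\bigl(h^{\min(p,q+1-\mu)}\bigr)$. For the remainder, $R\in C^{r}$ with $R^{(r)}$ absolutely continuous gives, after one integration by parts, $|\widehat R(s)|\lesssim\bigl(|f^{(r)}(0)|+\int_0^t|f^{(r+1)}|\bigr)|s|^{-r-1}$ on $\mathrm{Re}\,s\geq0$; with $r>\max(p+\mu+1,p,q+1)$ every contour integral is then absolutely convergent, the high-frequency tail $|s|h\gtrsim1$ is negligible, and the prefactor $|f^{(r)}(0)|+\int_0^t|f^{(r+1)}(s)|\,\dd s$ appears.

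I expect the main obstacle to be the uniform control of the error contour integral in the transition zone $|s|h\sim1$: there one must combine the $A$-stability furnished by Assumption \ref{assumption} (which keeps $K^{(\alpha)}\!\lp\gamma(z)/h\rp$ bounded on the circle $|z|=\lambda$), the precise asymptotics of $\gamma(z)/h$ as $z$ approaches the unit circle, and the cancellation coming from the RK order conditions, so that nothing larger than $h^{\min(p,q+1-\mu)}$ survives. Since $K^{(\alpha)}$ manifestly meets the structural assumptions of \cite{BaLuMar}, I would not reproduce this analysis but invoke \cite[Theorem 5]{BaLuMar}, referring the reader to \cite{LuOs,BaLo} for the construction and a detailed error analysis of RKCQ based on Radau IIA and Lobatto IIIC methods.
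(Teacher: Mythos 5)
Your proposal matches the paper's treatment: the paper does not prove this theorem either, but states it as a direct citation of \cite[Theorem 5]{BaLuMar}, noting only that $K^{(\alpha)}(s)=s^{-\alpha}$ already satisfies the regularity/growth hypotheses of that result (exactly the verification you carry out, with $\mu=-\alpha$ for the integral and $\mu=\alpha>0$ for the derivative case via $W^{(-\alpha)}$). Your additional sketch of the mechanism behind the cited theorem goes beyond what the paper records, but the logical route — check the symbol bound, then invoke the abstract RKCQ convergence theorem verbatim — is the same.
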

The previous theorem is applicable  to the fractional integral \eqref{RLInt:a} when $\mu=\alpha<0$ and to the fractional derivative \eqref{RLDer:a} in the case $\mu>0$.  The convolution is then defined by
\begin{equation}\label{RKConQua2}
D_{-}^{\alpha}f(\mathbf t_k)\approx\mathcal{D}_{-}^{\alpha}\mathbf f_k:=\sum_{n=0}^{k}W_{k-n}^{(-\alpha)}{\bf f}_{n}.
\end{equation}
An important observation is that Theorem \ref{thm:RKCQ_convergence} indicates that stage order limits the convergence for $\alpha>0$. Consequently, Runge–Kutta methods with low stage order cannot guarantee full accuracy for higher-order fractional derivatives.

\section{Fractional Variational Integrators with RKCQ}\label{sec:FVI}
In this section we derive fractional integrators for restricted variational principle using  Runge-Kutta convolution quadratures. We first recall the restricted variational principle as presented in \cite{JiOb1,JiOb2}, which forms the foundation for the construction of such integrators.

\subsection{Continuous Setting of Restricted Variational Principle}\label{sec:restrectedCOV}
Let  $x,y\in AC^2([0,T],\mathbb R^d)$ and  $L:\mathsf{T}\R^d\Flder \R$ be a $C^2$-Lagrangian (using the natural identification $\mathsf{T}\R^d\cong\R^d\times\R^d$). Define the augmented fractional Lagrangian
\begin{equation}\label{GenerLag}
\begin{array}{rcl} L^{(\alpha)}:\R^d\times\R^d\times\R^d\times\R^d\times\R^d\times\R^d &  \Flder  & \R \\
 (x,\,y,\,\dot x,\,\dot y,\, D^{\alpha}_{-}x,\, D^{\alpha}_{+}y)    & \mapsto&L^{(\alpha)}(x,y,\dot x,\dot y, D^{\alpha}_{-}x,D^{\alpha}_{+}y)\\ [2ex]
  L^{(\alpha)}(x,\,y,\,\dot x,\,\dot y,\, D^{\alpha}_{-}x,\, D^{\alpha}_{+}y)=L(x,\dot x)+&&\hspace{-1cm}L(y,\dot y)-\rho\,D^{\alpha}_{-}x\,D^{\alpha}_{+}y,
\end{array}
\end{equation}
where $\rho >0$. With this Lagrangian, we define the relevant action $\mathcal{L}: AC^2([0,T])\times AC^2([0,T])\to \R$ by  $\mathcal{L}(x,y)=\Lc(x,y)+\Lf(x,y)$ with
\begin{equation}\label{FracAction}
\begin{split}
\Lc(x,y)&=\int_0^T(L(x(t),\dot x(t))+L(y(t),\dot y(t))\,)\, \dd t\\
\Lf(x,y)&=-\rho\,\int_0^T D^{\alpha}_{-}x(t)\,D^{\alpha}_{+}y(t)\,\dd t.
\end{split}
\end{equation}
Moreover,  we define restricted varied curves (both curves have the same variation) as
\begin{equation}\label{VariedCurves}
x_{\epsilon}(t)=x(t)+\epsilon\, \delta x(t),\,\,\,y_{\epsilon}(t)=y(t)+\epsilon\, \delta x(t),\quad \epsilon>0
\end{equation}
and a $AC^2$-curve $\delta x:[0,T]\Flder\R^d$ such that $\delta x(0)=\delta x(T)=0$. Now, assuming fixed endpoints $x(0),\,x(T),\,y(0),\,y(T)$, we have the restricted fractional Euler-Lagrange equations:
\begin{theorem}[FEL equations]\label{ContTheo}
The equations
\begin{subequations}\label{ContFracDamp}
\begin{align}
\frac{\dd}{\dd t}\lp\frac{\der L(x,\dot x)}{\der\dot x}\rp-\frac{\der L(x,\dot x)}{\der x}=-\rho\,D^{(2\alpha)}_{-}x,\label{ContFracDamp:a}\\
\frac{\dd}{\dd t}\lp\frac{\der L(y,\dot y)}{\der\dot y}\rp-\frac{\der L(y,\dot y)}{\der y}=-\rho\,D^{(2\alpha)}_{+}y,\label{ContFracDamp:b}
\end{align}
\end{subequations}
are  {\rm sufficient} conditions for the extremals of $\L(x,y)$ \eqref{FracAction} under restricted variations \eqref{VariedCurves}.
\end{theorem}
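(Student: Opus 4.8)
The plan is to compute the first variation $\left.\frac{\dd}{\dd\epsilon}\right|_{\epsilon=0}\mathcal{L}(x_\epsilon,y_\epsilon)$ directly, using the fact that both curves carry the \emph{same} variation $\delta x$, and then show that the equations \eqref{ContFracDamp} forcing this variation to vanish. First I would split the computation along $\mathcal{L}=\Lc+\Lf$. For the conservative part $\Lc$, differentiating under the integral sign and integrating by parts in the standard way (using $\delta x(0)=\delta x(T)=0$ to kill the boundary terms) yields
\[
\left.\frac{\dd}{\dd\epsilon}\right|_{\epsilon=0}\Lc(x_\epsilon,y_\epsilon)=\int_0^T\!\left[\frac{\der L(x,\dot x)}{\der x}-\frac{\dd}{\dd t}\frac{\der L(x,\dot x)}{\der\dot x}+\frac{\der L(y,\dot y)}{\der y}-\frac{\dd}{\dd t}\frac{\der L(y,\dot y)}{\der\dot y}\right]\delta x(t)\,\dd t.
\]

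Next I would treat the fractional part. Since $D^\alpha_-$ and $D^\alpha_+$ are linear operators, $\left.\frac{\dd}{\dd\epsilon}\right|_{\epsilon=0}\Lf(x_\epsilon,y_\epsilon)=-\rho\int_0^T\bigl(D^\alpha_-\delta x\cdot D^\alpha_+y+D^\alpha_-x\cdot D^\alpha_+\delta x\bigr)\,\dd t$. Here is the crux: I would apply the asymmetric integration-by-parts formula \eqref{IntegrationByParts2} to move the fractional derivative off $\delta x$ in each term — the first term becomes $\int_0^T\delta x\cdot D^\alpha_+D^\alpha_+y\,\dd t$ and the second becomes $\int_0^T\delta x\cdot D^\alpha_-D^\alpha_-x\,\dd t$ (noting $\sigma\mapsto-\sigma$ swaps $+$ and $-$). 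Then the composition/``semigroup'' identity \eqref{Aditive2} collapses each double derivative: $D^\alpha_+D^\alpha_+y=D^{(2\alpha)}_+y$ and $D^\alpha_-D^\alpha_-x=D^{(2\alpha)}_-x$. Collecting everything, the first variation equals
\[
\int_0^T\!\left[\frac{\der L(x,\dot x)}{\der x}-\frac{\dd}{\dd t}\frac{\der L(x,\dot x)}{\der\dot x}-\rho D^{(2\alpha)}_-x+\frac{\der L(y,\dot y)}{\der y}-\frac{\dd}{\dd t}\frac{\der L(y,\dot y)}{\der\dot y}-\rho D^{(2\alpha)}_+y\right]\delta x(t)\,\dd t,
\]
and if \eqref{ContFracDamp:a} and \eqref{ContFracDamp:b} both hold, each bracketed group vanishes pointwise, so the integral is zero for every admissible $\delta x$; hence $(x,y)$ is an extremal. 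This establishes sufficiency.

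I would also explicitly flag why the converse fails — this is what makes the statement say \emph{sufficient} rather than \emph{necessary}. Because $x$ and $y$ share a single variation $\delta x$, vanishing of the first variation only forces the \emph{sum} of the two bracketed Euler--Lagrange-type expressions to be $L^2$-orthogonal to all $\delta x$ vanishing at the endpoints, i.e.\ the sum is zero; it does not separately force each to vanish. The system \eqref{ContFracDamp} is one particular way to make the sum zero (by making each summand zero), which is why it is sufficient but not necessary; I would remark that this is the ``restricted'' character of the principle, consistent with \cite{JiOb2}.

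The main obstacle is purely a matter of regularity bookkeeping for the integration-by-parts step: formula \eqref{IntegrationByParts2} requires $\delta x$, $x$, $y$ to lie in the appropriate $I^\alpha_\pm(L^p)$ spaces with the exponent condition $\frac1p+\frac1q\le1+\alpha$, and the composition formula \eqref{Aditive2} needs $x,y$ smooth enough (here the hypothesis $x,y\in AC^2$ and the remark on \eqref{caputo} are exactly what is needed). I would note that these conditions hold under the standing assumptions — $\delta x\in AC^2$ with homogeneous endpoints embeds into the required domain — so the manipulations are legitimate; beyond this, the proof is a routine chain of linearity, the two fractional properties, and the classical calculus-of-variations argument.
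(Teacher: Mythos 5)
Your proposal is correct and follows essentially the same route the paper relies on: the paper states Theorem \ref{ContTheo} without reproducing a proof (it is quoted from \cite{JiOb1,JiOb2}), but its discrete counterpart, Theorem \ref{thm:maintheorem}, is proved by precisely your strategy --- split the action into conservative and fractional parts, move the fractional derivatives off the variation via the asymmetric integration by parts \eqref{IntegrationByParts2}, collapse the compositions with \eqref{Aditive2}, and note that the common (restricted) variation only forces the sum of the two Euler--Lagrange expressions to vanish, hence sufficiency but not necessity. Beyond the regularity caveats you already flag for \eqref{IntegrationByParts2} and \eqref{Aditive2}, there is nothing to add.
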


\subsection{Discrete Setting Based on RKCQ}

We analogously define the \textit{advanced} RKCQ by 
\begin{equation}\label{RKConQuaPlus}
\mathcal{J}_{+}^{\alpha}f(\mathbf t_k)\approx\mathcal{J}_{+}^{\alpha}f(\mathbf t_k):=\sum_{n=0}^{N-k}\left[W_{n}^{(\alpha)}\right]^\top\mathbf{f}_{k+n}.
\end{equation}
 where the transpose relationship  in the equality can be followed from the definition of the series, namely
\[
\left[K^{(\alpha)}\lp\frac{\gamma(z)}{h}\rp\right]^\top=\left[\sum_{n=0}^{\infty} W_n^{(\alpha)}z^n\right]^\top = \sum_{n=0}^{\infty} \left[W_n^{(\alpha)}\right]^\top z^n.
\]

\begin{remark}
In the formulation of the discrete restricted Hamilton's principle, all variables must be treated consistently within the discrete framework. Therefore, the function 
$f$ should be represented by its discrete approximation 
$f_d=\{\mathbf f_k\}_{0:N}$, defined at the discrete time. Accordingly, the convolution quadratures \eqref{RKConQua} and \eqref{RKConQuaPlus}  should be constructed using these discrete values, rather than than using the evaluation of the continuous function $f(t)$ at grid time $t_k^i$. This ensures consistency with the fully discrete variational framework and avoids mixing continuous and discrete representations.
\end{remark}

With these ingredients, we can establish the asymmetric integration by parts for the RKCQ which we need to derive the restricted discrete dynamics.

\begin{lemma}\label{ConvPropertiesRK1}
Consider two discrete series $\left\{ {\bf f}_k\right\}_{0:N}, \left\{ {\bf g}_k\right\}_{0:N}$, where ${\bf f}_k=(f_k^1,.\ldots,f_k^r)^\top\in \R^r$ and ${\bf g}_k=(g_k^1,\ldots,g_k^r)^\top\in \R^r$. Then the following properties hold true:
\begin{enumerate}
    \item The semigroup property of the discrete convolution: $\mathcal{J}_{\sigma}^{\alpha}\circ \mathcal{J}_{\sigma}^{\alpha}{\bf f }_k=\mathcal{J}_{\sigma}^{(2\alpha)}{\bf f }_k$, $\sigma\in \{-,+\}$.
\item  The asymmetric integration by parts:
 \begin{equation}\label{AsymmetricInt:2}
\sum_{k=0}^{N}\left\langle {\bf g}_k, \mathcal{J}_{-}^{\alpha}{\bf f }_k\right\rangle=\sum_{k=0}^{N}\left\langle \mathcal{J}_{+}^{\alpha}{\bf g}_k, {\bf f }_k\right\rangle.
\end{equation}  
\end{enumerate}
\end{lemma}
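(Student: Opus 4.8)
The plan is to prove both parts by manipulating the generating-function (power series) identities that define the convolution weights $W_n^{(\alpha)}$, exactly as one does for scalar CQ, but keeping track of the matrix structure.

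For part (1), the semigroup property, I would start from the defining series $K^{(\alpha)}\!\lp\gamma(z)/h\rp=\sum_{n\ge 0}W_n^{(\alpha)}z^n$. Since $K^{(\alpha)}(s)=s^{-\alpha}$ we have $K^{(\alpha)}(s)\,K^{(\alpha)}(s)=s^{-2\alpha}=K^{(2\alpha)}(s)$ as scalar functions, and this remains true when $s$ is replaced by the matrix argument $\gamma(z)/h$ because $\gamma(z)/h$ commutes with itself (functional calculus of a single matrix is multiplicative). Hence
\[
\sum_{n\ge 0}W_n^{(2\alpha)}z^n=K^{(2\alpha)}\!\lp\tfrac{\gamma(z)}{h}\rp=\Bigl(\sum_{n\ge 0}W_n^{(\alpha)}z^n\Bigr)^2=\sum_{n\ge 0}\Bigl(\sum_{m=0}^n W_m^{(\alpha)}W_{n-m}^{(\alpha)}\Bigr)z^n,
\]
so matching coefficients gives the discrete Cauchy-product identity $W_n^{(2\alpha)}=\sum_{m=0}^n W_m^{(\alpha)}W_{n-m}^{(\alpha)}$. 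I would then substitute this into the definition \eqref{RKConQua}: expanding $\mathcal{J}_{-}^{\alpha}\circ\mathcal{J}_{-}^{\alpha}\mathbf f_k=\sum_{n=0}^k W_{k-n}^{(\alpha)}\bigl(\sum_{m=0}^n W_{n-m}^{(\alpha)}\mathbf f_m\bigr)$ and reindexing the double sum (sum over $m$ first, then over the weight indices) reproduces $\sum_{m=0}^k W_{k-m}^{(2\alpha)}\mathbf f_m=\mathcal{J}_{-}^{(2\alpha)}\mathbf f_k$. The case $\sigma=+$ follows by the same computation after transposing, using the already-noted fact that the weights of $\mathcal{J}_{+}^{\alpha}$ are $\bigl[W_n^{(\alpha)}\bigr]^\top$ together with $(W_m^{(\alpha)}W_{n-m}^{(\alpha)})^\top=\bigl[W_{n-m}^{(\alpha)}\bigr]^\top\bigl[W_m^{(\alpha)}\bigr]^\top$; the order reversal is absorbed by the reindexing.

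For part (2), the asymmetric integration by parts, I would simply write out both sides as triple sums over the discrete indices and the stage indices and swap the order of summation. The left side is $\sum_{k=0}^N\langle\mathbf g_k,\sum_{n=0}^k W_{k-n}^{(\alpha)}\mathbf f_n\rangle=\sum_{k=0}^N\sum_{n=0}^k\langle\mathbf g_k,W_{k-n}^{(\alpha)}\mathbf f_n\rangle$. Using $\langle\mathbf g_k,W_{k-n}^{(\alpha)}\mathbf f_n\rangle=\langle\bigl[W_{k-n}^{(\alpha)}\bigr]^\top\mathbf g_k,\mathbf f_n\rangle$ and exchanging the summation region $\{0\le n\le k\le N\}$ to sum over $n$ first with $k$ ranging over $n\le k\le N$, I set $m=k-n$ so that $k=n+m$ ranges giving $\sum_{n=0}^N\sum_{m=0}^{N-n}\langle\bigl[W_m^{(\alpha)}\bigr]^\top\mathbf g_{n+m},\mathbf f_n\rangle=\sum_{n=0}^N\langle\mathcal{J}_{+}^{\alpha}\mathbf g_n,\mathbf f_n\rangle$, which is the right-hand side by the definition \eqref{RKConQuaPlus}. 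This is essentially bookkeeping — a discrete Fubini — and needs no analytic input.

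The only genuine subtlety, and the step I would be most careful about, is the matrix functional calculus in part (1): one must justify that $K^{(\alpha)}(M)K^{(\alpha)}(M)=K^{(2\alpha)}(M)$ for the matrix $M=\gamma(z)/h$ — equivalently that $(M^{-\alpha})^2=M^{-2\alpha}$ with a consistent branch of the fractional power. Since $\gamma(z)$ is (generically) diagonalizable with eigenvalues in the right half-plane for $|z|$ small under Assumption \ref{assumption} (this is exactly what makes the series converge and is used in the algorithm's diagonalization step), the fractional power is defined via the principal branch on each eigenvalue, and multiplicativity $s^{-\alpha}s^{-\alpha}=s^{-2\alpha}$ holds eigenvalue-by-eigenvalue, hence for the matrix. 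I would state this justification explicitly and otherwise treat the coefficient-matching and index-swapping as routine. Everything extends verbatim from $\R^r$ to $\R^{r\times d}$ by applying the scalar weights blockwise, so I would phrase the proof for $d=1$ and remark that the general case is identical.
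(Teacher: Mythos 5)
Your proposal is correct and follows essentially the same route as the paper: part (2) is exactly the paper's discrete-Fubini computation (swap the summation region $0\le n\le k\le N$, shift the index, move the weight onto $\mathbf g$ via the transpose, and recognize the definition \eqref{RKConQuaPlus}), while for part (1) the paper merely cites the analogous lemma of \cite{HaJiOb}, whose content is precisely your generating-function/Cauchy-product argument $W_n^{(2\alpha)}=\sum_{m=0}^n W_m^{(\alpha)}W_{n-m}^{(\alpha)}$ adapted to matrix weights. Your explicit justification of $\bigl(M^{-\alpha}\bigr)^2=M^{-2\alpha}$ for $M=\gamma(z)/h$ and the remark that the transposed weights handle the $\sigma=+$ case are sound additions rather than deviations.
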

\begin{proof}
The proof of the fist property is analogous to that of \cite[Lemma 4.1]{HaJiOb}. For the second one, using the definition \eqref{RKConQua} and employing lower index for the row vectors yield the following
\[
\sum_{k=0}^{N}\left\langle {\bf g}_k, \mathcal{J}_{-}^{\alpha}{\bf f }_k\right\rangle=\sum_{k=0}^{N}\sum_{n=0}^{k}{\bf g}_k^\top W_n^{(\alpha)}\,{\bf f}_{k-n}=\sum_{k=0}^{N}\sum_{n=0}^{k}\sum_{i,j=1}^r(g_k)_i\left[W_n^{(\alpha)}\right]_{j}^i f_{k-n}^j;
\]
From this last expression, extracting the $i,j$ sums to the front, we have 
\[
\begin{split}
&\sum_{i,j=1}^r\sum_{k=0}^{N}\sum_{n=0}^{k}(g_k)_i\left[W_n^{(\alpha)}\right]_{j}^if_{k-n}^j=^{1}\sum_{i,j=1}^r\sum_{n=0}^N\sum_{k=n}^N(g_k)_i\left[W_n^{(\alpha)}\right]_{j}^i f_{k-n}^j\\
&=^{2}\sum_{i,j=1}^r\sum_{n=0}^N\sum_{k=0}^{N-n}(g_{k+n})_i\left[W_n^{(\alpha)}\right]_{j}^if_{k}^j=^{3}\sum_{i,j=1}^{r}\sum_{k=0}^N\sum_{n=0}^{N-k}(g_{k+n})_i \left[W_n^{(\alpha)}\right]_{j}^if_{k}^j\\
&=^4\sum_{k=0}^N\sum_{n=0}^{N-k}\sum_{i,j=1}^r(g_{k+n})_i\left[W_n^{(\alpha)}\right]_{j}^if_{k}^j=^5\sum_{k=0}^N\sum_{n=0}^{N-k} \mathbf g_{k+n} W_n^{(\alpha)}\mathbf f_{k}\\
&=^6\sum_{k=0}^N(\mathcal{J}_{+}^{\alpha}{\bf g}_k)^\top\, {\bf f}_k.
\end{split}
\]
Observe that in $=^{1,2,3}$ we have employed the same relationships as in Lemma \ref{ConvPropertiesRK1}, since for these, which only involve the index $k$,  the indices $i,j$ are not affected. In $=^4$ we have rearranged the sum order, in $=^5$ we have employed matrix notation and finally, in $=^6$, we use the definition \eqref{RKConQuaPlus}; and the result follows.
\end{proof}

Before deriving fractional variational integrators with RKCQ \S\ref{sec:FVI}, which are fully characterized by Butcher Tableau, we first establish the following two technical lemmas.  
\begin{lemma}\label{ConvPropertiesRK2}
Consider two discrete series $\left\{ {\bf f}_k\right\}_{ 0:N}, \left\{ {\bf g}_k\right\}_{0:N}$, where ${\bf f}_k=(f_k^1,.\ldots,f_k^r)^\top\in \R^r$ and ${\bf g}_k=(g_k^1,\ldots,g_k^r)^\top\in \R^r$ and let $\mathbf{B}=\diag(b),\ b\in \mathbb R^r$ Then the following property holds true:
\begin{equation}\label{AsymmetricInt:3}
\sum_{k=0}^{N}\left\langle {\bf g}_k, \mathcal{J}_{-}^{\alpha}{\bf B f }_k\right\rangle=\sum_{k=0}^{N}\left\langle {\bf B}\mathcal{J}_{+}^{\alpha}{\bf g}_k, {\bf f }_k\right\rangle.
\end{equation}
\end{lemma}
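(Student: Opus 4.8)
The plan is to reduce Lemma~\ref{ConvPropertiesRK2} directly to the asymmetric integration by parts already proved in Lemma~\ref{ConvPropertiesRK1}(2). Since $\mathbf B=\diag(b)$ maps $\R^r$ to $\R^r$, the family $\{\mathbf B\mathbf f_k\}_{0:N}$ is again an admissible discrete series, and $\mathcal{J}_{-}^{\alpha}(\mathbf B\mathbf f_k)=\sum_{n=0}^{k}W_n^{(\alpha)}\mathbf B\mathbf f_{k-n}$ is simply the discrete convolution \eqref{RKConQua} applied to it. Hence, using \eqref{AsymmetricInt:2} with $\mathbf f_k$ replaced by $\mathbf B\mathbf f_k$, I would write
\[
\sum_{k=0}^{N}\left\langle\mathbf g_k,\mathcal{J}_{-}^{\alpha}\mathbf B\mathbf f_k\right\rangle=\sum_{k=0}^{N}\left\langle\mathcal{J}_{+}^{\alpha}\mathbf g_k,\mathbf B\mathbf f_k\right\rangle.
\]
Then I would move $\mathbf B$ into the first slot of the Euclidean pairing: for any $u,v\in\R^r$ one has $\langle u,\mathbf B v\rangle=u^\top\mathbf B v=(\mathbf B^\top u)^\top v=\langle\mathbf B^\top u,v\rangle$, and since $\mathbf B$ is diagonal, $\mathbf B^\top=\mathbf B$. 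Taking $u=\mathcal{J}_{+}^{\alpha}\mathbf g_k$ and $v=\mathbf f_k$ turns the right-hand side into $\sum_{k=0}^N\langle\mathbf B\mathcal{J}_{+}^{\alpha}\mathbf g_k,\mathbf f_k\rangle$, which is exactly \eqref{AsymmetricInt:3}.

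Alternatively, to stay closer to the component-wise bookkeeping used in the proof of Lemma~\ref{ConvPropertiesRK1}, I would expand $\langle\mathbf g_k,\mathcal{J}_{-}^{\alpha}\mathbf B\mathbf f_k\rangle=\sum_{n=0}^{k}\sum_{i,j=1}^r(g_k)_i\,[W_n^{(\alpha)}]_j^i\,b_j\,f_{k-n}^j$, pull the scalar $b_j$ (which depends on neither $k$ nor $n$) to the front alongside the $i,j$ sums, carry out verbatim the same reindexing $\sum_k\sum_{n=0}^{k}\mapsto\sum_k\sum_{n=0}^{N-k}$ together with the shift $g_k\mapsto g_{k+n}$ performed there, and finally recognize $\sum_{n=0}^{N-k}\sum_{i=1}^r b_j[W_n^{(\alpha)}]_j^i(g_{k+n})_i$ as the $j$-th component of $\mathbf B\mathcal{J}_{+}^{\alpha}\mathbf g_k$ via \eqref{RKConQuaPlus}.

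I do not expect a genuine obstacle: the whole content is that inserting the diagonal matrix $\mathbf B$ merely inserts the scalar weights $b_j$, which commute with all the index manipulations, while diagonality ($\mathbf B^\top=\mathbf B$) is precisely what lets $\mathbf B$ be transferred from the $\mathbf f$-slot to the $\mathbf g$-slot without acquiring a transpose. The one point worth flagging is that, were $\mathbf B$ a general matrix, the identity would instead read $\sum_{k=0}^N\langle\mathbf B^\top\mathcal{J}_{+}^{\alpha}\mathbf g_k,\mathbf f_k\rangle$ on the right, so the hypothesis $\mathbf B=\diag(b)$ is used in an essential (if mild) way; I would state this restriction explicitly.
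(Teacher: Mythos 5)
Your proposal is correct. The paper itself gives no new computation here: its proof of Lemma~\ref{ConvPropertiesRK2} is the single remark that the argument is ``analogous to that of Lemma~\ref{ConvPropertiesRK1}'', i.e.\ one repeats the index manipulations $=^{1}$--$=^{6}$ with the weights $b$ carried along --- which is precisely your second, component-wise argument (and you track the scalar correctly: with the paper's convention $\widetilde{\mathcal{D}}^{\alpha}_{+}{\bf y}_k:=\mathcal{D}^{\alpha}_{+}{\bf B y}_k$, the operator acts on the series $\{\mathbf B\mathbf f_k\}$, so the weight enters as $b_j$ attached to $f^j_{k-n}$). Your primary argument is a slightly different and cleaner route than what the paper gestures at: instead of re-running the reindexing, you apply \eqref{AsymmetricInt:2} verbatim to the admissible series $\{\mathbf B\mathbf f_k\}$ and then transfer $\mathbf B$ across the Euclidean pairing using $\mathbf B^\top=\mathbf B$. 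This buys a computation-free reduction and makes explicit the only place where the hypothesis $\mathbf B=\diag(b)$ (symmetry) is actually used --- a point the paper leaves implicit --- and your remark that for general $\mathbf B$ the right-hand side would carry $\mathbf B^\top$ is a worthwhile clarification.
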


\begin{proof}
The proof is analogous to that of Lemma \ref{ConvPropertiesRK1}.
\end{proof}

 In what follows, we set $r=s+1$. Moreover, we focus on Lobatto IIIC methods for which $c_1=0$ and $c_{r}=1$ and we denote $W_k^{(\alpha)}$ the matrix weights associated to the fractional derivatives, instead of $W_k^{(-\alpha)}$.
\begin{table}[H]
\centering
$\setlength{\tabcolsep}{20pt}
\renewcommand{\arraystretch}{1.5}\begin{array}{c|cc}
0  &  \frac{1}{2} & - \frac{1}{2} \\
   1  & \frac{1}{2} &\phantom{-}\frac{1}{2} \\ \hline 
   & \frac{1}{2}& \frac{1}{2}
\end{array}\hspace{2cm}\begin{array}{c|ccc}
0  &  \frac{1}{6} & - \frac{1}{3} & \phantom{-}\frac{1}{6} \\
   \frac{1}{2}  &\frac{1}{6} &  \phantom{-}\frac{5}{12} & -\frac{1}{12} \\
 1   &\frac{1}{6} & \phantom{-} \frac{2}{3} & \phantom{-}\frac{1}{6}  \\ \hline 
 &\frac{1}{6} &  \phantom{-}\frac{2}{3} & \phantom{-}\frac{1}{6} 
\end{array}\hspace{2cm}\begin{array}{c|cccc}
0  &  \frac{1}{12} & - \frac{\sqrt{5}}{12} & \frac{\sqrt{5}}{12}&-\frac{1}{12} \\
\frac{5-\sqrt{5}}{10} &  \frac{1}{12} &  \frac{1}{4} & \frac{10-7\sqrt{5}}{60}&\phantom{-}\frac{\sqrt{5}}{60} \\
\frac{5+\sqrt{5}}{10}  &  \frac{1}{12} & \frac{10+7\sqrt{5}}{60} & \frac{1}{4}&-\frac{\sqrt{5}}{60} \\
1  &  \frac{1}{12} &  \frac{5}{12} & \frac{5}{12}&\phantom{-}\frac{1}{12} \\ \hline
 &\frac{1}{12} &  \frac{5}{12} & \frac{5}{12} & \phantom{-}\frac{1}{12}
\end{array}$
\caption{Lobatto IIIC methods with $r = 2$, $r = 3$  and $r = 4.$}
\label{tab:Lobatto}
\end{table}

We consider two discrete series $x_d:=\{\mathbf{x}_k\}_{0:N}$ and  $y_d:=\{\mathbf{y}_k\}_{0:N}$ with $\mathbf{x}_k:=(x_k^1,\ldots,x_k^{s+1})^\top$ and $\mathbf{y}_k:=(y_k^1,\ldots,y_k^{s+1})^\top$ such that $(x_{k-1}^{s+1},y_{k-1}^{s+1})=(x_{k}^{1},y_{k}^{1})$. 
Regarding the approximation of \eqref{FracAction}, we consider the following discrete action:  
\begin{equation}\label{DiscFracActionHORK}
\begin{split}
\mathcal{L}_d(z_d)=\Lc_d&(x_d,y_d)+\Lf_d(x_d,y_d), \\
\Lc_d(x_d,y_d)=\sum_{k=0}^{N-1}\big[L_d(x_k)+L_d(y_k)\big],&\quad \Lf_d(x_d,y_d)=-\rho\,h\,\,\sum_{k=0}^{N}\left\langle\widetilde{\mathcal{D}}^{\alpha}_{+}{\bf y}_k,\mathcal{D}^{\alpha}_{-}{\bf x}_k\right\rangle,\\
L_d(x_k)=h\sum_{i=1}^rb_iL(x_d(c_i\,h;k),\,\dot x_d(c_i\,h;k)),&\,\,\,\,\,\, L_d(y_k)=h\sum_{i=1}^rb_iL(y_d(c_i\,h;k),\dot y_d(c_i\,h;k)),
\end{split}
\end{equation}
where we employ the RK convolution quadratures \eqref{RKConQua}, \eqref{RKConQuaPlus} with $\widetilde{\mathcal{D}}^{\alpha}_{+}{\bf y}_k:=\mathcal{D}^{\alpha}_{+}{\bf B y}_k$. Now, we again consider  restricted varied curves in order to establish the variational principle. In vectorial form, these  varied curves read read as follows
\begin{equation}\label{RestrictedVariedInnerRK}
x_d^{\epsilon}=\left\{\mathbf{x}_k+\epsilon \delta \mathbf{x}_k\right\}_{0:N},\qquad y_d^{\epsilon}=\left\{\mathbf{y}_k+\epsilon \delta \mathbf{x}_k\right\}_{0:N}
\end{equation}
where $\delta \mathbf x_k=(\delta x_k^1,\ldots,\delta x_k^{s+1})^\top$. Naturally, we will set 
$x_N^{i}=0$ for $i=2,\ldots,s+1$ and  $\delta\mathbf  x_N=0$ (equivalent for $y$), besides $\delta x_{0}^{1}=\delta y_{0}^{1}=\delta x_{N-1}^{s+1}(=\delta x_{N}^{1})=0=\delta y_{N-1}^{s+1}(=\delta y_{N}^{1})$, since $x_{0}^{1},\,y_{0}^{1},\,x_{N-1}^{s+1}=x_{N}^{1},\,y_{N-1}^{s+1}=y_{N}^{1}$  are fixed.

The following lemma is needed for the proof of the main theorem. In its proof and in the rest of the paper, we adopt the notation $[A]^i$ for the $i$-th row of a matrix $A$.
\begin{lemma}\label{VariationCommRK}
According to the definitions \eqref{RKConQua}, \eqref{RKConQuaPlus} and considering varied curves \eqref{RestrictedVariedInnerRK}, we have that 
\[
\delta \mathcal{D}^{\alpha}_{-}{\bf x}_k = \mathcal{D}^{\alpha}_{-}{\bf \delta x}_k\qquad\text{and}\qquad \delta \mathcal{D}^{\alpha}_{+}{\bf y}_k = \mathcal{D}^{\alpha}_{+}{\bf \delta y}_k.
\]
\end{lemma}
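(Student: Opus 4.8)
The plan is to reduce the statement to the elementary fact that the variation operator $\delta$ commutes with a fixed linear map, the only subtlety being to make explicit that the RKCQ operators are linear in their sequence argument. First I would recall that, by \eqref{RKConQua2} and \eqref{RKConQuaPlus} together with the convention adopted in this section ($W_k^{(\alpha)}$ now denoting the weights attached to the fractional \emph{derivative}), the discrete operators act as
\[
\mathcal{D}^{\alpha}_{-}{\bf f}_k=\sum_{n=0}^{k}W^{(\alpha)}_{k-n}{\bf f}_n,\qquad
\mathcal{D}^{\alpha}_{+}{\bf f}_k=\sum_{n=0}^{N-k}\bigl[W^{(\alpha)}_{n}\bigr]^{\top}{\bf f}_{k+n}.
\]
The key observation is that the weight matrices $W^{(\alpha)}_n\in\R^{r\times r}$ depend only on the Laplace symbol $K^{(\alpha)}$, on the function $\gamma(z)$ built from the Butcher tableau, and on the step size $h$; they are entirely independent of the discrete sequence to which they are applied. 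Consequently each of $\mathcal{D}^{\alpha}_{-}$, $\mathcal{D}^{\alpha}_{+}$ is an $\R$-linear map on sequences in $(\R^r)^{N+1}$, and each defining sum is finite.

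Second, I would simply substitute the varied curves \eqref{RestrictedVariedInnerRK}. For the retarded operator, writing $x_d^{\epsilon}=\{{\bf x}_k+\epsilon\,\delta{\bf x}_k\}_{0:N}$ and using linearity and finiteness of the sum,
\[
\mathcal{D}^{\alpha}_{-}{\bf x}^{\epsilon}_k=\sum_{n=0}^{k}W^{(\alpha)}_{k-n}\bigl({\bf x}_n+\epsilon\,\delta{\bf x}_n\bigr)=\mathcal{D}^{\alpha}_{-}{\bf x}_k+\epsilon\,\mathcal{D}^{\alpha}_{-}\delta{\bf x}_k,
\]
so that differentiating in $\epsilon$ at $\epsilon=0$ yields $\delta\mathcal{D}^{\alpha}_{-}{\bf x}_k=\mathcal{D}^{\alpha}_{-}\delta{\bf x}_k$. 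The argument for the advanced operator is verbatim the same, using $y_d^{\epsilon}=\{{\bf y}_k+\epsilon\,\delta{\bf x}_k\}_{0:N}$ (whence $\delta{\bf y}_k=\delta{\bf x}_k$) and the transposed weights $\bigl[W^{(\alpha)}_{n}\bigr]^{\top}$, which are again sequence-independent; this gives $\delta\mathcal{D}^{\alpha}_{+}{\bf y}_k=\mathcal{D}^{\alpha}_{+}\delta{\bf y}_k$.

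Finally, I would point out that no endpoint or fixed-boundary conditions on $\delta{\bf x}_k$ are used here: the identity is a purely algebraic commutation between $\delta$ and a linear convolution, valid on arbitrary sequences; the endpoint constraints enter only later, in the derivation of the discrete Euler--Lagrange equations through Lemmas~\ref{ConvPropertiesRK1} and \ref{ConvPropertiesRK2}. I do not anticipate any real obstacle here — the lemma is the discrete counterpart of the trivial commutation of $\delta$ with a fixed linear operator, and the only thing worth stating carefully is the independence of the weights $W^{(\alpha)}_n$ from the sequence and the finiteness of the convolution sums.
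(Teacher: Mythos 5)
Your proposal is correct and follows essentially the same route as the paper: the paper likewise substitutes the varied curve into the finite convolution sum, uses that the weights $W_n^{(\alpha)}$ do not depend on the sequence, and differentiates at $\epsilon=0$ to pull $\delta$ through the sum. Your explicit remarks on linearity, finiteness of the sums, and the irrelevance of endpoint conditions are just a slightly more verbose packaging of the same computation.
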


\begin{proof}
We pick $\delta \mathcal{D}^{\alpha}_{-}{\bf x}_k$; the proof for $\delta \mathcal{D}^{\alpha}_{+}{\bf y}_k$ is similar. We have for  a fixed $i$
\begin{align*}
\delta \mathcal{D}^{\alpha}_{-}{\bf x}_k=\frac{\dd}{\dd\epsilon}\bigg|_{\epsilon=0}\mathcal{D}^{\alpha}_{-}{\bf x}_d^{\epsilon}
&=\frac{\dd}{\dd\epsilon}\bigg|_{\epsilon=0} \sum_{n=0}^{k}\sum_{j=1}^{r}\left[W_n^{(\alpha)}\right]^i_{j}(x_{k-n}^j+\epsilon\,\delta x_{k-n}^j)\\
&=\sum_{n=0}^{k}\sum_{j=1}^{r}\left[ W_n^{(\alpha)}\right]^i_{j}\delta x_{k-n}^j=\mathcal{D}^{\alpha}_{-}\,{\bf \delta x}_k.
\end{align*}
\end{proof}

\begin{theorem}[FDEL equations]\label{thm:maintheorem}
Let us consider two discrete series $\left\{{\bf x}_k\right\}_{0:N}$ and $\left\{{\bf y}_k\right\}_{0:N}$ such that $x_N^{2:s+1}=y_N^{2:s+1}=0$. Then, the equations
\begin{subequations}\label{FinalRK}
\begin{align}
&D_{s+1}L_d(x_{k-1}^{1},\ldots,x_{k-1}^{s+1})+ D_1L_d(x_k^1,\ldots,x_k^{s+1})\nonumber\\
&\hspace{3cm}-\rho h\left(b_1\left[\mathcal{D}^{(2\alpha)}_{-}{\bf x}_k\right]^1 +b_s\left[ \mathcal{D}^{(2\alpha)}_{-}{\bf x}_{k-1}\right]^{s+1}\right)=0,\quad  k=1,\ldots, N-1,\label{FinalRK1}\\
&D_iL_d(x_k^1,\ldots,x_k^{s+1})-\rho h b_i \left[\mathcal{D}^{(2\alpha)}_{-}{\bf x}_k\right]^i=0,\,\hspace{1cm}\,\,\quad k=0,\ldots, N-1,\,\quad i=2,\ldots,s,\label{FinalRKi}\\
&D_{s+1}L_d(y_{k-1}^{1},\ldots,y_{k-1}^{s+1})+ D_1L_d(y_k^0,\ldots,y_k^s)\nonumber\\
&\hspace{3cm}-\rho h\left(\left[\mathcal{D}^{(2\alpha)}_{+}{\bf B y}_k\right]^1 +\left[\mathcal{D}^{(2\alpha)}_{+}{\bf B y}_{k-1}\right]^{s+1}\right)=0,\quad  k=1,\ldots, N-1,\\
&D_iL_d(y_k^{1},\ldots,y_k^{s+1})-\rho h \left[\mathcal{D}^{(2\alpha)}_{+}{\bf B y}_k\right]^i=0,\,\hspace{1cm}\,\,\quad k=0,\ldots, N-1,\,\quad i=2,\ldots,s,
\end{align}
\end{subequations}
 are a sufficient condition for the extremals of \eqref{DiscFracActionHORK} under restricted calculus of variations \eqref{RestrictedVariedInnerRK}.
\end{theorem}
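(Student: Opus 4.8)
The plan is to mirror, at the discrete level, the continuous restricted variational argument behind Theorem~\ref{ContTheo}. I will compute $\tfrac{\dd}{\dd\epsilon}\big|_{\epsilon=0}\mathcal{L}_d(z_d^\epsilon)$ along the restricted varied curves \eqref{RestrictedVariedInnerRK}, rewrite the outcome as a single sum $\sum_{k,i}(\,\cdot\,)_{k,i}\,\delta x_k^i$ over the \emph{free} variations, and then observe that the system \eqref{FinalRK} is exactly what makes each coefficient vanish --- which gives \eqref{FinalRK} as a \emph{sufficient} (and, as in the continuous case, not necessary) condition, precisely because the two chains share the same variation $\delta\mathbf{y}_k=\delta\mathbf{x}_k$. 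The action splits as $\mathcal{L}_d=\Lc_d+\Lf_d$, and the two pieces are treated separately and added at the end.

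\emph{Conservative part.} Since $x_d$ enters $\Lc_d$ only through $\sum_k L_d(\mathbf{x}_k)$, which is precisely the Galerkin discrete action of \S\ref{HO-Action} (and likewise $y_d$), $\delta\Lc_d$ is the standard Galerkin computation: using the continuity $x_{k-1}^{s+1}=x_k^1$ to identify $\delta x_{k-1}^{s+1}=\delta x_k^1$ (and the same for $y$), shifting the index in the terminal-node terms, and dropping the contributions at $k=0$, at $k=N$ and at the terminal nodes --- all of which vanish by $\delta x_0^1=\delta y_0^1=0$, $\delta\mathbf{x}_N=0$ and $x_N^{2:s+1}=y_N^{2:s+1}=0$ --- one gets
\[
\delta\Lc_d=\sum_{k=1}^{N-1}\big[D_{s+1}L_d(\mathbf{x}_{k-1})+D_1L_d(\mathbf{x}_k)\big]\delta x_k^1+\sum_{k=0}^{N-1}\sum_{i=2}^{s}D_iL_d(\mathbf{x}_k)\,\delta x_k^i
\]
together with the analogous expression in $\mathbf{y}$.

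\emph{Fractional part.} Here the essential move is to take the variation off the nonlocal operators. By Lemma~\ref{VariationCommRK} the variation commutes with the RKCQ, and with $\delta\mathbf{y}_k=\delta\mathbf{x}_k$ the product rule applied to the bilinear pairing in $\Lf_d$ gives
\[
\delta\Lf_d=-\rho h\sum_{k=0}^{N}\Big[\langle\mathcal{D}^{\alpha}_{+}\mathbf{B}\,\delta\mathbf{x}_k,\ \mathcal{D}^{\alpha}_{-}\mathbf{x}_k\rangle+\langle\mathcal{D}^{\alpha}_{+}\mathbf{B}\mathbf{y}_k,\ \mathcal{D}^{\alpha}_{-}\delta\mathbf{x}_k\rangle\Big].
\]
Applying the discrete asymmetric integration by parts (Lemma~\ref{ConvPropertiesRK1}, with Lemma~\ref{ConvPropertiesRK2} used to pass the diagonal weight matrix $\mathbf{B}$ through the convolution) and the semigroup property $\mathcal{D}^{\alpha}_{\sigma}\circ\mathcal{D}^{\alpha}_{\sigma}=\mathcal{D}^{(2\alpha)}_{\sigma}$, the second pairing becomes $\langle\mathcal{D}^{(2\alpha)}_{+}\mathbf{B}\mathbf{y}_k,\delta\mathbf{x}_k\rangle$ and the first $\langle\mathbf{B}\,\mathcal{D}^{(2\alpha)}_{-}\mathbf{x}_k,\delta\mathbf{x}_k\rangle$, so that
\[
\delta\Lf_d=-\rho h\sum_{k=0}^{N}\langle\mathbf{B}\,\mathcal{D}^{(2\alpha)}_{-}\mathbf{x}_k+\mathcal{D}^{(2\alpha)}_{+}\mathbf{B}\mathbf{y}_k,\ \delta\mathbf{x}_k\rangle.
\]
Writing this out componentwise, re-indexing the $i=s+1$ terms via $\delta x_k^{s+1}=\delta x_{k+1}^1$, and once more discarding the $k=N$ and boundary terms, one recovers exactly the fractional contributions in \eqref{FinalRK}: the coefficient $b_1[\mathcal{D}^{(2\alpha)}_{-}\mathbf{x}_k]^1+b_{s+1}[\mathcal{D}^{(2\alpha)}_{-}\mathbf{x}_{k-1}]^{s+1}$ in front of $\delta x_k^1$, the coefficient $b_i[\mathcal{D}^{(2\alpha)}_{-}\mathbf{x}_k]^i$ in front of an inner $\delta x_k^i$, and the matching $\mathbf{y}$-terms, in which $\mathbf{B}$ has already been absorbed into $\mathcal{D}^{(2\alpha)}_{+}\mathbf{B}\mathbf{y}$.

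\emph{Conclusion and main obstacle.} Adding the two contributions, $\delta\mathcal{L}_d(z_d^\epsilon)|_{\epsilon=0}=\sum_{k,i}(A^x_{k,i}+A^y_{k,i})\,\delta x_k^i$, where $A^x_{k,i}$ and $A^y_{k,i}$ are the left-hand sides of the $x$- and $y$-equations of \eqref{FinalRK} at the index $(k,i)$. If \eqref{FinalRK} holds, then $A^x_{k,i}=A^y_{k,i}=0$ for every free $(k,i)$, hence $\delta\mathcal{L}_d=0$ for all admissible $\delta\mathbf{x}_k$, proving sufficiency. All the analytic content --- commuting $\delta$ past the convolution, the asymmetric integration by parts, and the semigroup collapse $\alpha+\alpha\mapsto 2\alpha$ --- is already contained in Lemmas~\ref{VariationCommRK}, \ref{ConvPropertiesRK1} and \ref{ConvPropertiesRK2}, so no additional analysis is required. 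I expect the only genuinely delicate step to be the index bookkeeping: aligning the sum in $\Lc_d$ (over $k=0,\ldots,N-1$) with the one in $\Lf_d$ (over $k=0,\ldots,N$), correctly pairing the terminal node $x_{k-1}^{s+1}$ with the initial node $x_k^1$ after the shift, and verifying that each boundary term is killed by one of $\delta x_0^1=\delta y_0^1=0$, $\delta\mathbf{x}_N=0$, $x_N^{2:s+1}=y_N^{2:s+1}=0$.
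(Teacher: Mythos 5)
Your proposal is correct and follows essentially the same route as the paper's own proof: split $\mathcal{L}_d$ into $\Lc_d+\Lf_d$, use Lemma \ref{VariationCommRK} to commute the variation with the RKCQ, apply the asymmetric integration by parts of Lemmas \ref{ConvPropertiesRK1}--\ref{ConvPropertiesRK2} together with the semigroup property to collapse $\alpha+\alpha\mapsto 2\alpha$, and then collect coefficients of the free $\delta x_k^i$ using the boundary conditions, so that \eqref{FinalRK} makes every coefficient vanish and sufficiency follows. The only (harmless) divergence is notational: your weight $b_{s+1}$ on the $\left[\mathcal{D}^{(2\alpha)}_{-}{\bf x}_{k-1}\right]^{s+1}$ term agrees with Algorithm \ref{alg:FractionalAlgorithm} rather than with the $b_s$ written in \eqref{FinalRK1}, which appears to be a typo in the theorem statement.
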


\begin{proof}
For the conservative part, we have under the restricted variation $\delta \mathbf x=  \delta\mathbf  y$ that
\begin{equation}\label{eq:variation_proof}
\delta\Lc_d(x_d,y_d)=\sum_{k=0}^{N-1}\sum_{i=1}^{s+1}\lp D_iL_d(x_k)+D_iL_d(y_k)\rp\,\delta x_k^{i},
\end{equation}
where $D_iL_d(x_k^{1},\ldots,x_k^{s+1})\delta x_k^{i}:=\frac{\partial L_d(x_k)}{\partial x_k^{i}}\delta x_k^{i}$. For the fractional part, we employ the Leibnitz rule of the derivative and Lemma \ref{VariationCommRK}, we obtain
\[
\delta\Lf_d(x_d,y_d)=-\rho\,h\sum_{k=0}^{N}\left\langle\widetilde{\mathcal{D}}^{\alpha}_{+}{\bf \delta\, y}_k,\mathcal{D}^{\alpha}_{-}{\bf x}_k\right\rangle -\rho\,h\sum_{k=0}^{N}\left\langle\widetilde{\mathcal{D}}^{\alpha}_{+}{\bf y}_k,\mathcal{D}^{\alpha}_{-}{\bf \delta\, x}_k\right\rangle.
\]
Using Lemmas \ref{ConvPropertiesRK1} and \ref{ConvPropertiesRK2}, and taking into account that $\delta {\bf x}_k = \delta {\bf y}_k$, we have
\[
\begin{split}
\delta\Lf(x_d,y_d)=^1&-\rho\,h\sum_{k=0}^{N}\left\langle{\bf \delta\, x}_k,\mathbf{B}\mathcal{D}^{(2\alpha)}_{-}{\bf x}_k\right\rangle -\rho\,h\sum_{k=0}^{N}\left\langle\mathcal{D}^{(2\alpha)}_{+}{\bf B y}_k,{\bf \delta\, x}_k\right\rangle\\
=^2&-\rho\,h\sum_{k=0}^{N}\delta {\bf x}_k^\top (\mathbf{B}\mathcal{D}^{(2\alpha)}_{-}{\bf x}_k) -\rho\,h\sum_{k=0}^{N}(\mathcal{D}^{(2\alpha)}_{+}{\bf B y}_k)^\top\,\delta {\bf x}_k\\
=^3&-\rho\,h\sum_{k=0}^{N} \delta x_k^{\nu_i}\left[\mathbf{B}\mathcal{D}^{(2\alpha)}_{-}{\bf x}_k\right]^i -\rho\,h\sum_{k=0}^{N}\left[\mathcal{D}^{(2\alpha)}_{+}{\bf B y}_k\right]^i\delta  x_k^{\nu_i}\\
=^4&-\rho\,h\sum_{k=0}^{N} \lp\delta_{ij}\left[\mathbf{B}\mathcal{D}^{(2\alpha)}_{-}{\bf x}_k\right]^j +\left[\mathcal{D}^{(2\alpha)}_{+}{\bf B y}_k\right]^i\rp\,\delta  x_k^{\nu_i}\\
=^5&-\rho\,h\sum_{k=0}^{N-1} \lp\delta_{ij}\left[\mathbf{B}\mathcal{D}^{(2\alpha)}_{-}{\bf x}_k\right]^j +\left[\mathcal{D}^{(2\alpha)}_{+}{\bf B y}_k\right]^i\rp\,\delta  x_k^{\nu_i};
\end{split}
\]
in $=^1$ we have also employed the semigroup property and expanded the sum over $i$; in $=^4$ we have used the Euclidean metric $\delta_{ij}$ and rearranged terms; finally, in $=^5$ we have $\delta {\bf x}_N=0$.  Putting everything together we have
\[
\begin{split}
\delta\mathcal{L}_d(x_d,y_d)=&\sum_{k=0}^{N-1}\sum_{i=1}^{s+1}\lp D_iL_d(x_k)+D_iL_d(y_k)-h\rho\,\delta_{ij}(\mathbf{B}\mathcal{D}^{(2\alpha)}_{-}{\bf x}_k)^j -h\rho\,\left[\mathcal{D}^{(2\alpha)}_{+}{\bf By}_k\right]_i\rp\delta x_k^{i}\\
=&\sum_{k=0}^{N-1}\sum_{i=1}^{s+1}\lp D_iL_d(x_k)-h\rho\,\delta_{ij}\left[\mathbf{B}\mathcal{D}^{(2\alpha)}_{-}{\bf x}_k\right]^j\rp\delta x_k^{i}\\
&\qquad\qquad\qquad+\sum_{k=0}^{N-1}\sum_{i=1}^{s+1}\lp D_iL_d(y_k)-h\rho\,\left[\mathcal{D}^{(2\alpha)}_{+}{\bf B y}_k\right]_i\rp\delta x_k^{i}\\
=&\sum_{k=0}^{N-1}\sum_{i=2}^{s}\lp D_iL_d(x_k)-h\rho\,\delta_{ij}\,\left[\mathbf{B}\mathcal{D}^{(2\alpha)}_{-}{\bf x}_k\right]^j\rp\delta x_k^{i}\\
&\quad\quad\quad+\sum_{k=1}^{N-1}\lp D_{s+1}L_d(x_{k-1})+ D_1L_d(x_k)-h\rho\,\delta_{1j}\,\left[\mathbf{B}\mathcal{D}^{(2\alpha)}_{-}{\bf x}_k\right]^j\rp\delta x_k^{1}\\
+&\sum_{k=0}^{N-1}\sum_{i=2}^{s}\lp D_iL_d(y_k)-h\rho\,\left[\mathcal{D}^{(2\alpha)}_{+}{\bf B y}_k\right]_i\rp\delta x_k^{i}\\
&\quad\quad\quad+\sum_{k=1}^{N-1}\lp D_{s+1}L_d(y_{k-1})+ D_1L_d(y_k)-h\rho\,\left[\mathcal{D}^{(2\alpha)}_{+}{\bf By}_k\right]_1\rp\delta x_k^{1},
\end{split}\]
where it is taken into account that $\delta x_0^1=0$ and that $D_{s+1}L_d(x_{k-1})=D_1L_d(x_k).$ Now, given that $\delta x_k^{i}$ are arbitrary for $k=0,\ldots,N-1$, $i=1,\ldots,s+1$ (except $\delta x_0$), we obtain the system \eqref{FinalRK} from the last equality and the theorem follows.
\end{proof}

The FDEL equations \eqref{FinalRK1} and \eqref{FinalRKi}, corresponding to the $x$-part, define a discrete iteration scheme for the fractional dynamics \eqref{ContFracDamp:a}. The general FDEL scheme with RKCQ is summarized in Algorithm \ref{alg:FractionalAlgorithm}. As the system is nonlinear, each step is typically solved using a numerical root-finding, such as a Newton method, to compute the $s$ unknowns $(x_k^1,\ldots,x_k^{s+1}=x_{k+1}^1)$. Moreover, since fractional derivatives are non-local, the implementation must account for the full past of the function, which typically requires additional interpolation or extrapolation rather than using only local values. In Algorithm \ref{alg:FractionalAlgorithm}, this extrapolation is performed for the term $\mathcal{D}^{(2\alpha)}_{-}{\bf x}_{k-1}$.

\begin{algorithm}{}
\begin{algorithmic}[1]
\State {\bf Initial data}: $N,\, h,\,\alpha,\,W_n^{(\alpha)},\, x_0^1,\, p_{x_0}.$
 \State {\bf Solve for} $x_0^2, \ldots,x_0^{s+1}$ {\bf from} 

\[
\begin{split}
p_{x_0}=&-D_1L_d(x_0^1, \ldots,x_0^{s+1})+\rho hb_1\left[\mathcal{D}^{(2\alpha)}_{-}{\bf x}_k\right]^1,\\
0=&\phantom{-} D_iL_d(x_0^1, \ldots.,x_0^{s+1})-\rho hb_i\left[\mathcal{D}^{(2\alpha)}_{-}{\bf x}_k\right]^i,\quad \forall\,i=2, \ldots,s.
\end{split}
\]
\State {\bf Initial points:} $x_0^1, \ldots,x_0^{s+1}=x_1^1$
    \For {$k= 1: N-1$} 
    
\State  {\bf Solve for} $x_{k}^2, \ldots,x_k^{s+1}=x_{k+1}^1$ {\bf from} 
\[
\begin{split}
0=&\,\,D_{s+1}L_d(x_{k-1}^1, \ldots,x_{k-1}^{s+1})+ D_1L_d(x_k^1, \ldots,x_k^{s+1})-\rho h \left(b_1\left[\mathcal{D}^{(2\alpha)}_{-}{\bf x}_k\right]^1+b_{s+1}\left[\mathcal{D}^{(2\alpha)}_{-}{\bf x}_{k-1}\right]^{s+1}\right),\\
0=&\,\,D_iL_d(x_k^1, \ldots,x_k^{s+1})-\rho h b_i\left[\mathcal{D}^{(2\alpha)}_{-}{\bf x}_k\right]^i,\quad \forall\,i=2, \ldots,s.
\end{split}
\]
    \EndFor
    \State  {\bf Output:} $(x_1^{i}, \ldots,x_{N-1}^{i}),\quad i=1, \ldots,s+1.$
\end{algorithmic}
\caption{FVI with RKCQ}\label{alg:FractionalAlgorithm}
\end{algorithm}

\subsection{Approximation Order} The restricted  Hamiltonian system associated to the FEL equation \eqref{ContFracDamp:a} is defined as
\begin{align}\label{eq:xpart_frac_hamiltonian_sys}
\begin{split}
\begin{dcases}
\dot x &= \phantom{-}p\\
\dot 	p &= -\nabla U(x)-\rho  D_-^{(2\alpha)}x. 
\end{dcases}
\end{split}
\end{align}
The error analysis  for FVI \eqref{FinalRK1}-\eqref{FinalRKi} requires the definition of discrete momenta, and it can be done by applying the discrete Legendre transform. In this section, we focus on the  $2$-stage Lobatto IIIC method.
\begin{align}
        p_k^-&=-D_1L_d(x_k,x_{k+1})+\frac{\rho h}{2} \left[\mathcal{D}^{(2\alpha)}_{-}{\bf x}_k\right]^1\\
   p_{k+1}^+&=\phantom{-}D_2L_d(x_k,x_{k+1})-\frac{\rho h}{2} \left[\mathcal{D}^{(2\alpha)}_{-}{\bf x}_k\right]^2.
\end{align}
For the mechanical Lagrangian $L(q,\dot q)=\dot q^\top \dot q -U(q)$, the associated approximation to the $2$-stage Lobatto IIIC method reads
\begin{align}
    L_d(x_k,x_{k+1})&=\frac{h}{2}L\left(x_k,\frac{x_{k+1}-x_k}{h}\right)+\frac{h}{2}L\left(x_{k+1},\frac{x_{k+1}-x_k}{h}\right)\\
&=\frac{1}{2h}\left(x_{k+1}-x_k\right)^\top \left(x_{k+1}-x_k\right)-\frac{h}{2}\nabla U(x_k)-\frac{h}{2}\nabla U(x_{k+1})
\end{align}
With $\alpha=1/2$, we have only two non-vanishing weights, given by
\[W_0=\begin{pmatrix}
    1&1\\
    -1&1
\end{pmatrix},\quad W_1=\begin{pmatrix}
    0&-2\\
    0&0
\end{pmatrix}.\]
Then, the approximation \eqref{RKConQua2} becomes 
\[\mathcal{D}^{(2\alpha)}_{-}{\bf x}_k= \frac{1}{h}\begin{bmatrix}
    x_{k+1}-x_k\\
        x_{k+1}-x_k
\end{bmatrix}\]
Now, we want to define the $qp$-formula, i.e.,~the map $(x_k,p_k)\mapsto (x_{k+1},p_{k+1})$ to \eqref{ContFracDamp:a}. We compute explicitly the discrete Legendre transform as
\begin{align}\label{eq:mumentum}
   \begin{split}
        p_k^-&=\frac{1}{h}\left(x_{k+1}-x_k\right)+\frac{h}{2}\nabla U(x_k) +\frac{\rho}{2}(x_{k+1}-x_k) \\
   p_{k+1}^+&=\frac{1}{h}\left(x_{k+1}-x_k\right)-\frac{h}{2}\nabla U(x_{k+1}) -\frac{\rho}{2}(x_{k+1}-x_k).
   \end{split}
\end{align}
\begin{theorem}
Let $\alpha=1/2$ with RKCQ associated to the $2$-stage Lobatto IIIC. The FVI with the corresponding scheme \eqref{eq:mumentum} applied to the system \eqref{eq:xpart_frac_hamiltonian_sys} is a second-order accurate method for both $x$ and $p$ variables. This means that their local errors are proportional to $O(h^3)$.
\end{theorem}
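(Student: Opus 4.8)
The plan is to exploit the fact that, for $\alpha=1/2$, the scheme \eqref{eq:mumentum} is, in disguise, an ordinary one-step integrator for a \emph{classical} (non-fractional) damped mechanical system, and then to establish second order by a direct Taylor expansion of the one-step map. First I would record two preliminary reductions. Since $2\alpha=1$, the Riemann--Liouville derivative $D^{(2\alpha)}_{-}$ coincides with $\dd/\dd t$ on $AC^1$ curves, so the restricted Hamiltonian system \eqref{eq:xpart_frac_hamiltonian_sys} is nothing but $\dot x=p$, $\dot p=-\nabla U(x)-\rho p$, whose solutions are as smooth as $U$ permits (I assume $U$ regular enough that the exact solution is $C^3$). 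On the discrete side, because $K^{(2\alpha)}(s)=s$ and $\gamma(z)$ is affine in $z$, the matrix series $K^{(2\alpha)}(\gamma(z)/h)=\gamma(z)/h$ is a polynomial of degree one in $z$, which is precisely why only $W_0,W_1$ survive; the resulting value $\mathcal{D}^{(2\alpha)}_{-}{\bf x}_k=\tfrac1h(x_{k+1}-x_k)(1,1)^\top$ is the plain difference quotient, which is second-order consistent with $\dot x$ at the midpoint $t_{k+1/2}$ --- this is where the extra order originates.

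Next I would make the one-step map explicit. Imposing momentum matching $p_k:=p_k^-=p_k^+$ turns \eqref{eq:mumentum} into
\[
p_k=\left(\tfrac1h+\tfrac{\rho}{2}\right)(x_{k+1}-x_k)+\tfrac h2\nabla U(x_k),\qquad
p_{k+1}=\left(\tfrac1h-\tfrac{\rho}{2}\right)(x_{k+1}-x_k)-\tfrac h2\nabla U(x_{k+1}).
\]
Solving the first relation gives the closed form $x_{k+1}=x_k+h\left(p_k-\tfrac h2\nabla U(x_k)\right)/(1+\rho h/2)$, and inserting it into the second yields $p_{k+1}=\frac{1-\rho h/2}{1+\rho h/2}\left(p_k-\tfrac h2\nabla U(x_k)\right)-\tfrac h2\nabla U(x_{k+1})$ as an explicit function of $(x_k,p_k)$.

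For the local error analysis I would freeze a node $t_k$, set $(x_k,p_k)=(x(t_k),p(t_k))$ with $x(\cdot)$ the exact solution, and expand the one-step map in powers of $h$: expanding $1/(1+\rho h/2)$ and $(1-\rho h/2)/(1+\rho h/2)$ as geometric series and re-expanding $\nabla U(x_{k+1})$ about $x_k$ via $x_{k+1}-x_k=hp_k+O(h^2)$, one obtains
\[
x_{k+1}-x_k=hp_k+\tfrac{h^2}{2}\left(-\nabla U(x_k)-\rho p_k\right)+O(h^3),
\]
\[
p_{k+1}-p_k=h\left(-\nabla U(x_k)-\rho p_k\right)+\tfrac{h^2}{2}\left(\rho\nabla U(x_k)+\rho^2 p_k-\nabla^2 U(x_k)p_k\right)+O(h^3).
\]
Differentiating the ODE gives $\dot p=-\nabla U(x)-\rho p$ and $\ddot p=-\nabla^2 U(x)p+\rho\nabla U(x)+\rho^2 p$, so the $h$- and $h^2$-coefficients above are exactly $\dot x(t_k),\tfrac12\ddot x(t_k)$ and $\dot p(t_k),\tfrac12\ddot p(t_k)$; comparing with $x(t_{k+1})=x(t_k)+h\dot x(t_k)+\tfrac{h^2}{2}\ddot x(t_k)+O(h^3)$ and the analogous expansion for $p$, I conclude $x_{k+1}-x(t_{k+1})=O(h^3)$ and $p_{k+1}-p(t_{k+1})=O(h^3)$, which is the asserted second-order local accuracy.

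The arithmetic is elementary; the step I expect to be the main obstacle is the bookkeeping of the $h^2$ terms in the momentum update. Since $x_{k+1}-x_k$ enters $p_{k+1}$ divided by $h$, the closed form for $x_{k+1}$ must be expanded one order beyond the naive one (through $O(h^3)$ in $x_{k+1}-x_k$), and $\nabla U(x_{k+1})$ must be re-expanded about $x_k$ to first order; tracking the cancellations so that the $\tfrac{h^2}{2}$-coefficient lands precisely on $\ddot p(t_k)$ --- rather than on some quantity differing from it at $O(1)$, which would only yield first order --- is where the content lies. Finally I would note that, read through the discrete Legendre transform, the same estimates show the one-step errors in $x$ and $p$ are consistent in the sense of the Marsden--West variational error analysis, so the global order is two as well.
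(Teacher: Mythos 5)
Your proposal is correct and follows essentially the same route as the paper's proof: you solve \eqref{eq:mumentum} for the explicit update $(x_{k+1},p_{k+1})$ in terms of $(x_k,p_k)$ (your closed forms agree with the paper's $\tfrac{2h}{2+\rho h}$ and $\tfrac{2-\rho h}{2+\rho h}$ factors), Taylor-expand both the numerical map and the exact flow of the reduced damped system $\dot x=p$, $\dot p=-\nabla U(x)-\rho p$, and match coefficients through $O(h^2)$ to get $O(h^3)$ local errors. The only additions beyond the paper are your explicit remarks on the $2\alpha=1$ reduction and the two-weight structure of the quadrature, which the paper states without comment; the core computation is identical.
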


\begin{proof}
By a simple computation, solving the two equations in \eqref{eq:mumentum} with respect to $x_{k+1}$ and $p_{k+1}$, respectively, gives
\begin{align}
    x_{k+1}&=x_k + \frac{2h}{2+\rho h}\left(p_k-\frac{h}{2}\nabla U(x_{k}) \right) \\
   p_{k+1}&=\frac{2-\rho h}{2+\rho h}\left(p_k-\frac{h}{2}\nabla U(x_k)\right)-\frac{h}{2}\nabla U(x_{k+1}), 
\end{align}
 Expand, for a small $h$, in Taylor series the exact position and momentum  $x_{\text{ex}}(t+h)$ and $p_{\text{ex}}(t+h)$ gives 
\begin{align}
   x_{\text{ex}}(t+h)&=x + hp+\frac{h^2}{2}\dot p+\frac{h^3}{6}\ddot p+O(h^4)\nonumber\\
             &=x + hp-\frac{h^2}{2}(\nabla U(x)+\rho p)+\frac{h^3}{6}\ddot p+O(h^4)\label{xexact}\\
   p_{\text{ex}}(t+h)&=p + h\dot p+\frac{h^2}{2}\ddot p+O(h^4)\nonumber\\
   &= p - h (\nabla U(x)+\rho  p)+\frac{h^2}{2}\left(-\frac{d}{dt}\nabla U(x)+\rho \nabla U(x)+\rho^2p\right)+O(h^3),\label{pexact}
\end{align}
here $x=x_{\text{ex}}(t)$ and $p=p_{\text{ex}}(t)$.  Similarly, expanding the numerical update $x_{k+1}$ yields 
\begin{align}
    x_{k+1}&=x_k + \frac{2h}{2+\rho h}\left(p_k-\frac{h}{2}\nabla U(x_{k}) \right)\nonumber \\
&=x_k + h\left(1-\frac{\rho h}{2}+\frac{\rho^2 h^2}{4}+O(h^3)\right)\left(p_k-\frac{h}{2}\nabla U(x_{k}) \right)\nonumber \\    
&=x_k+hp_k-\frac{h^2}{2}(\nabla U(x)+\rho p_k)+ \frac{\rho h^3}{4}(\nabla U(x_k)+\rho p_k)+O(h^4).\label{xnum}
\end{align}
For the update $p_{k+1}$, we have for a small $h$ that $\frac{2-\rho h}{2+\rho h}=1-\rho h+\rho^2 h^2/2+ O(h^3)$ and $\nabla U(x_{k+1})=U(x_{k})+h\frac{d}{dt}\nabla U(x_k)+O(h^2)$, so that
\begin{align}
 p_{k+1}&=\left(1-\rho h+\rho^2 h^2/2+ O(h^3)\right)\left(p_k-\frac{h}{2}\nabla U(x_k)\right)-\frac{h}{2}\left(U(x_{k})+h\frac{d}{dt}\nabla U(x_k)+O(h^2)\right)\nonumber\\
 &=p_k-h\left(\nabla U(x_k)-\rho p_k\right)+\frac{h^2}{2}\left(-\frac{d}{dt}\nabla U(x_k)+\rho \nabla U(x_k)+\rho^2p_k\right)+O(h^3).\label{pnum}
\end{align}
Comparing \eqref{xexact} with \eqref{xnum} and \eqref{pexact} with \eqref{pnum}, we thus have the local truncation errors for $x$ and $p$, that is
\[\left\| x_{k+1}-x_{\text{ex}}(t_{k+1})\right\|=O(h^3),\quad \left\| p_{k+1}-p_{\text{ex}}(t_{k+1})\right\|=O(h^3).\]
Hence,  the global error is $O(h^2)$ for  both $x$ and $p$.
\end{proof}

\subsection{Numerical Experiment}
We consider the following bidimensional problem associated to the coupled oscillators system subject to fractional damping
\begin{subequations}\label{eq:damped-oscillator}
\begin{align}
    m_1\ddot x_1 +\eta_1x_1  =- \gamma_1\,D_-^{(2\alpha)} x_1, \\
   m_2\ddot x_1 +\eta_2x_2   =- \gamma_2\,D_-^{(2\alpha)}x_2.
\end{align}
\end{subequations}
The corresponding conservative Lagrangian of this model is given by 
$$L(x,\dot x)=\frac{1}{2}\left(\dot x_1^2+\dot x_2^2\right)-\frac{1}{2} \left(\eta_1x_1^2+\eta_2x_2^2\right),$$ 
with $x=(x_1,x_2)$.
In these simulations, we consider the forced coupled oscillator, i.e.,~$\alpha=\frac{1}{2}$ subject to the initial conditions $x=(0.8,-0.5),\ \dot x =(0.4,0.0)$ with $m_1=m_2=1$, $\eta=\eta_1=\eta_2=0.5$ and damping coefficients  $\gamma_1=\gamma_2=0.25$, over the time interval $[0,20]$ with a time step $h=0.2$. Figure \ref{fig:forced_oscillator} top, shows the numerical solutions, while the energy decay is presented in the bottom left figure.  Given the energy of the system which defined by the Hamiltonian $E(t)=H(x,p)=\frac{1}{2}\|p\|^2+\frac{\eta}{2}\|x\|^2$ with $x=(x_1,x_2)$ and $p=(p_1,p_2)$, the bottom right figure displays the relative error of the energy in logarithmic scale, computed  as
\[E_{\text{err}}(t_k)=\frac{E_k-E(t_k)}{\max_{t\in[0,T]}|E(t)|}.\]
where $E_k=H(x_k,p_k)$, i.e.~the energy is evaluated at the approximated solution $x_k=(x_{1,k},x_{2,k})$ and $p_k=(p_{1,k},p_{2,k})$, and $E(t_k)=H(x_{\text{ex}}(t_k),p_{\text{ex}}(t_k))$.
\begin{figure}[htbp]
\centering
\includegraphics[width=.6\textwidth]{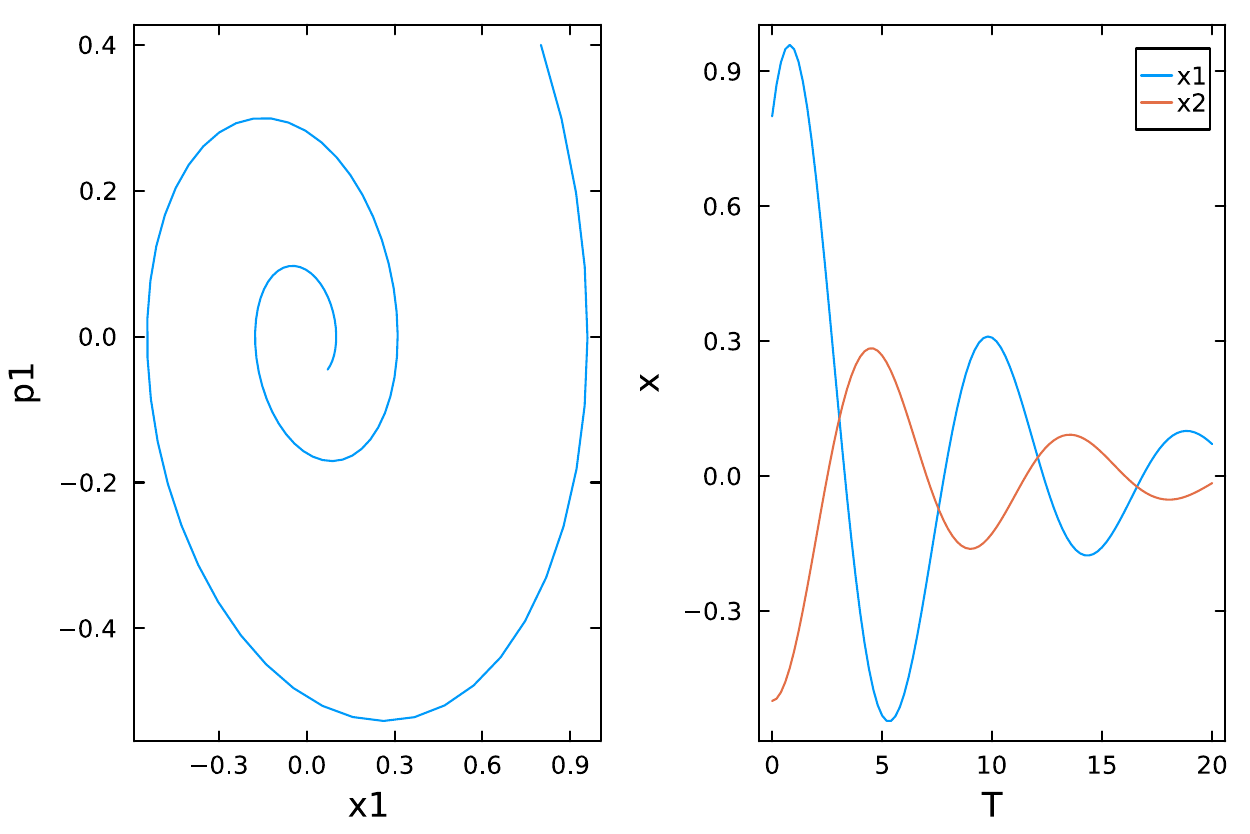}\\
\includegraphics[width=.6\textwidth]{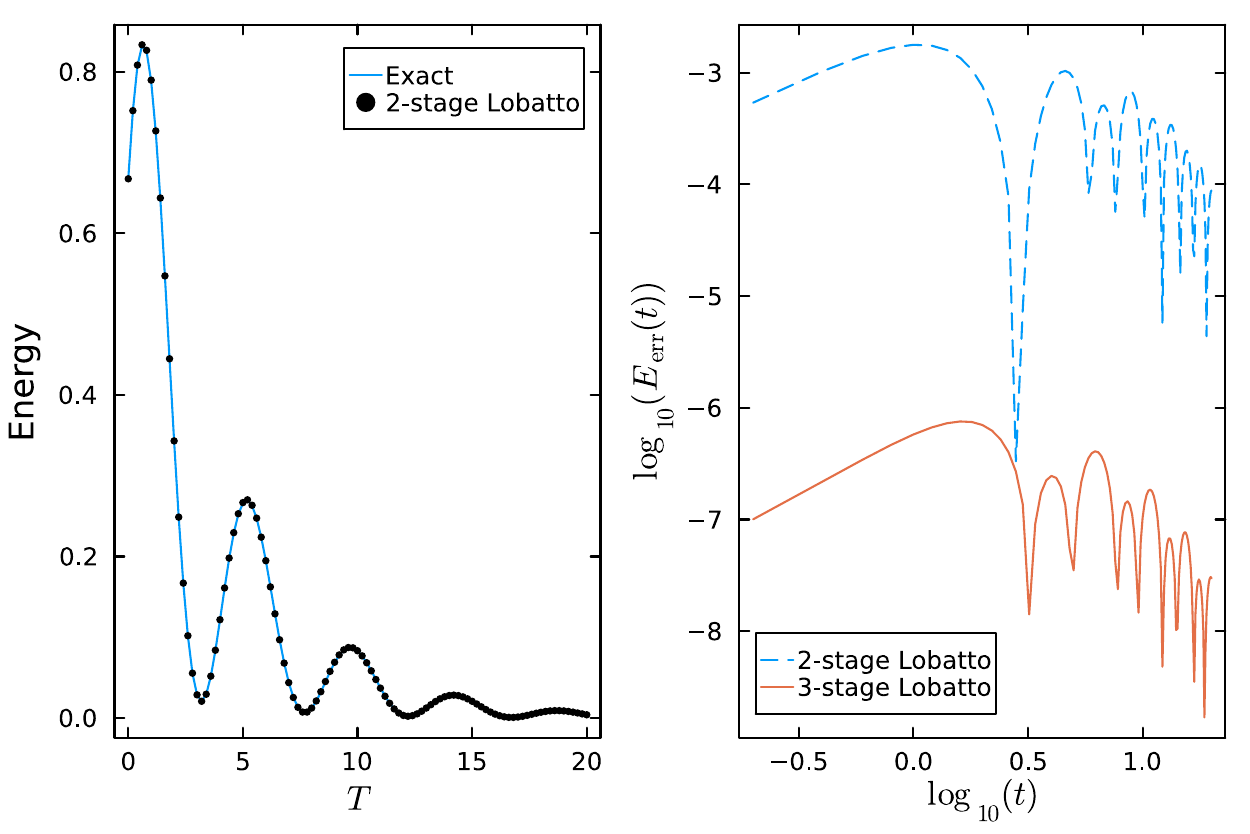}
\caption{Numerical results obtained over the interval $[0,20]$ with a step size
$h=0.2$ for bidimensional forced oscillator \eqref{eq:damped-oscillator} $(\alpha=1/2)$.
\textit{Top:} Numerical trajectories using 2-stage Lobatto IIIC.
\textit{Bottom left:} Energy decay. \textit{Bottom right:} Energy relative error $E_{\text{err}}(t_i)$.}
\label{fig:forced_oscillator}
\end{figure}

Figure \ref{fig:forced_oscillator} shows that the energy decay from our integrator is in excellent agreement with the exact solution. On the other hand, the relative variations in energy are very small, confirming that this physical quantity is very well approximated by our numerical method.

Now, we test the efficiency of FVI scheme \eqref{FinalRK1}-\eqref{FinalRKi} based on  $r$-stage Lobatto IIIC  with $r=2,3,4$. Let $(x(t) , p(t))$ be the exact position-momentum solution of the Hamiltonian system associated to \eqref{eq:damped-oscillator}. The errors at the main nodes are then computed as the global errors
\[\max_{\substack{k\in\{0,\ldots,N\} \\ i\in\{1,2\}}} \left|x_k - x(t_k)\right|,\qquad\max_{\substack{k\in\{0,\ldots,N\} \\ i\in\{1,2\}}} \left|p_k - p(t_k)\right|.
 \]
Figure \ref{fig:FVIcovergence-integer} confirms that, the integrator $r$-stage Lobatto IIIC  with $r=2,3,4$ is of order $O(h^{2r-2})$. This result coincide with the one obtained by \cite{SinaVerm} for the discrete Lagrange-d'Alembert principle.

\begin{figure}[htbp]
\centering
\includegraphics[width=.6\textwidth]{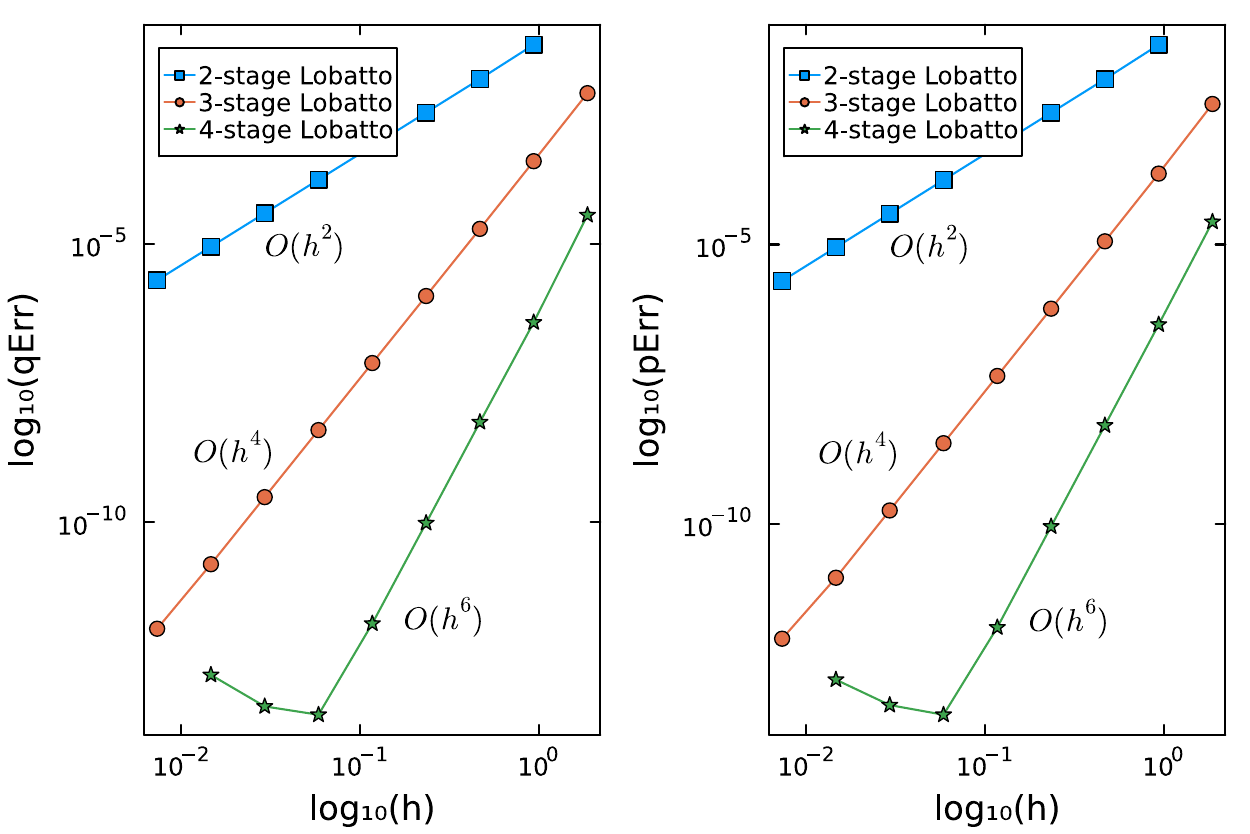}
\caption{Bidimensional forced oscillator \eqref{eq:damped-oscillator}. The accuracy of FVI based on Lobatto IIIC CQ with respect to position and momentum, Algorithm \ref{alg:FractionalAlgorithm} with $t\in[0,30]$ and $N=2^i,\ i=5,\ldots,12$.}
\label{fig:FVIcovergence-integer}
\end{figure}

In the following, we apply our FVI scheme with Lobatto IIIC CQ  to the following  one-dimensional fractional damping model 
\begin{equation}
\ddot x + \rho\ D_-^{(2\alpha)} x + x =f(t),\quad x(0)=\dot x(0)=0.  \label{eq:Torfik}
\end{equation}
This equation belongs to the family of  Bagley–Torvik equations and it can be derived via the restricted variational principle with a time dependent Lagrangian of the form $L(t,x,v)=v^2/2-x^2/2+xf(t)$. In addition, it is well known that analytical solutions of such problems are generally difficult to obtain and,  when available, they are often expressed in terms of special functions, i.g.,~the Mittag-Leffler function, making their evaluation computationally expensive. Nevertheless, for our specific problem with $\rho =1$, $\alpha=0.5$ and $f(t)=t^3+6t+3.2t^{2.5}/\Gamma(0.5)$ ($\Gamma$ is the Gamma function), the exact solution exists and is given explicitly by $x(t)=t^3$, see \cite{Ford}.

\begin{figure}[htbp]
\centering
\includegraphics[width=.6\textwidth]{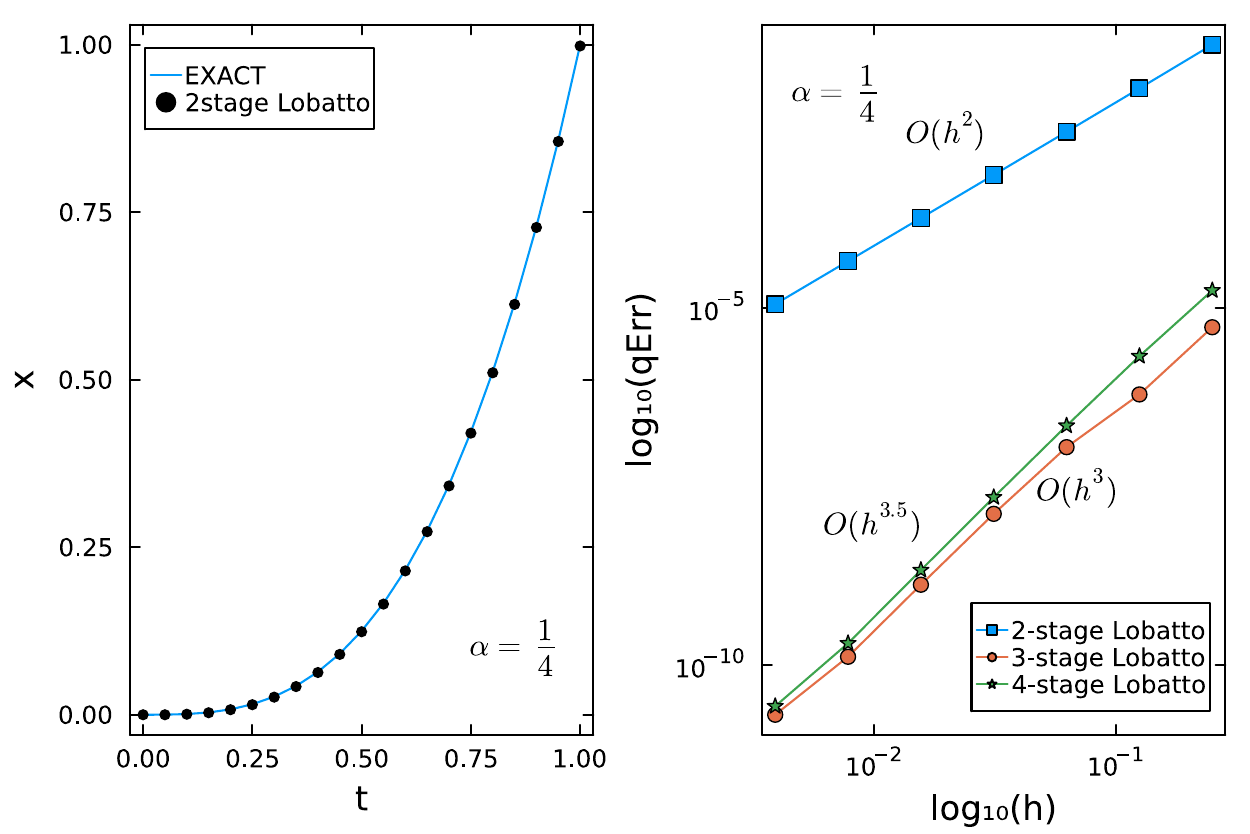}
\caption{Fractional Bagley–Torvik equation \eqref{eq:Torfik} with $\alpha=1/4$, i.e.,~involves the half-derivative. \textit{Left:} Numerical trajectory. \textit{Right:} The accuracy of Lobatto FVI, Algorithm \ref{alg:FractionalAlgorithm2}
 with $t \in [0, 1]$ and $N = 2^i,\ i = 2, \ldots, 8.$}
\label{fig:covergence-05}
\end{figure}

We compute the error term as
\[\max_{\substack{k\in\{0,\ldots,N\}}} \left|x_k - x(t_k)\right|.\]

As illustrated in Figure \ref{fig:covergence-05}, the proposed scheme demonstrates good agreement with the exact solutions. The observed convergence rates of  $O(h^{2})$, $O(h^{3})$ and $O(h^{3.5})$ align well with the predictions of Theorem \ref{thm:RKCQ_convergence}, with only minor discrepancies attributable to numerical errors

\subsection{Note on RKCQ based on Gauss methods}
The analysis of RKCQ based on Gauss methods, when applied to hyperbolic operators, has recently been reported in \cite{LeMa}.
\begin{remark}
The definition of RKCQ \eqref{RKConQua} based on the evaluation of the function $f$ at the RK nodes  $\left\{t_k+c_ih\right\}_{i=1}^r$.  On the other hand, the variations $\delta x_k^i$ appearing in \eqref{eq:variation_proof} also involve the main nodes, i.e.,~with $c_1=0$ and $c_r=1$, which must coincide  with the variations of Lemma \ref{VariationCommRK}. This is precisely why we employ Lobatto IIIC in the discrete action \eqref{DiscFracActionHORK}. For Radau or Gauss methods, the main nodes are not directly included. Thus, one would need to interpolate the function $f$.
 \end{remark}

In particular, with the  midpoint   $\setlength{\tabcolsep}{15pt}
\renewcommand{\arraystretch}{1.2}\begin{array}{c|c}
\frac{1}{2}   &  \frac{1}{2}   \\ \hline
     &  1
\end{array},$
the CQ generating function is  given by
$$\gamma_{\text{mid}}(z)=2\frac{1-z}{1+z},$$
which coincide  the generating function of the trapezoidal rule, a method for which convolution quadrature theory is already well established in \cite{ErSa} and its theoretical order is $2$ at the midpoint stage, see also \cite{LehSaBook} for more details.
We can define a modified midpoint CQ method (MIDCQ) using a local linear interpolation $\tilde f(t)=f_j+(t-t_j)(f_{j+1}-f_j)/h$ on each subinterval $[t_j,t_{j+1}]$, that is
$$\mathcal{D}_{-}^{\alpha} {f}_k\approx \sum_{j=0}^{k}w_{k-j}^{(\alpha)}\tilde f\left(t_j+h/2\right)=\sum_{j=0}^{k}w_{k-j}^{(\alpha)}\left( \frac{f_j+f_{j+1}}{2}\right),$$
where $w_{k-j}^{(\alpha)}$ is the Taylor expansion of $(\gamma_{\text{mid}}(z)/h)^\alpha$. This allows to approximate the fractional derivatives at $t_k+h/2$ and use the variation at the main nodes which are needed to derive the midpoint scheme.
\begin{algorithm}{}
\begin{algorithmic}[1]
\State {\bf Initial data}: $N,\, h,\,\alpha,\,w_n^{(\alpha)},\, x_0,\, p_{0}.$
\State {\bf Solve for} $x_1$ {\bf from} 
\[
p_{0}=-D_1L_d(x_0,x_1)+\rho hw_0^{(\alpha)}\left(\frac{x_1-x_0}{2}\right)
\]
\State {\bf Initial points:} $x_0, x_1$
    \For {$k= 1: N-1$} 

\State  {\bf Solve for} $x_{k+1}$ {\bf from} 
\[
0=D_{2}L_d(x_{k-1},x_{k})+ D_1L_d(x_k,x_{k+1})-\rho h \left(\mathcal{D}^{(2\alpha)}_{-}{\bf x}_k+\mathcal{D}^{(2\alpha)}_{-}{\bf x}_{k-1}\right)
\]
    \EndFor
    \State  {\bf Output:} $(x_2, \ldots,x_{N}).$
\end{algorithmic}
\caption{FVI with MIDCQ}\label{alg:FractionalAlgorithm2}
\end{algorithm}

\subsubsection{Numerical Experiment}
Now we apply the  midpoint scheme MIDCQ to the one-dimensional linear damped oscillator and a fractional damping model  \cite{Torvik} to observe the applicability and accuracy of the developed algorithm. Precisely, we consider the two following models
\begin{align}
\ddot x &+ \rho\ \dot x + x =0,\quad \rho =0.25,\quad x(0)=1,\quad \dot x(0)=0.5 \label{eq:1DdampedOsci} 
\end{align}
The second equation belongs to the family of  Bagley–Torvik equations which can be derived by the restricted variational principle with a time dependent Lagrangian of the form $L(t,x,v)=v^2/2-x^2/2+xf(t)$. The analytical solution is given by $x(t)=t^3$ for $\alpha=0.5$ with $f(t)=t^3+6t+3.2t^{2.5}/\Gamma(0.5)$ ($\Gamma$ is the Gamma function) with the data given above, see \cite{Ford}.

Figures \ref{fig:midpoint-covergence-integer} and \ref{fig:midpoint-covergence-noninteger} show that the FVI  MIDCQ variational integrator, as expected, still preserves full order $2$.

\begin{figure}[htbp]
\centering
\includegraphics[width=.6\textwidth]{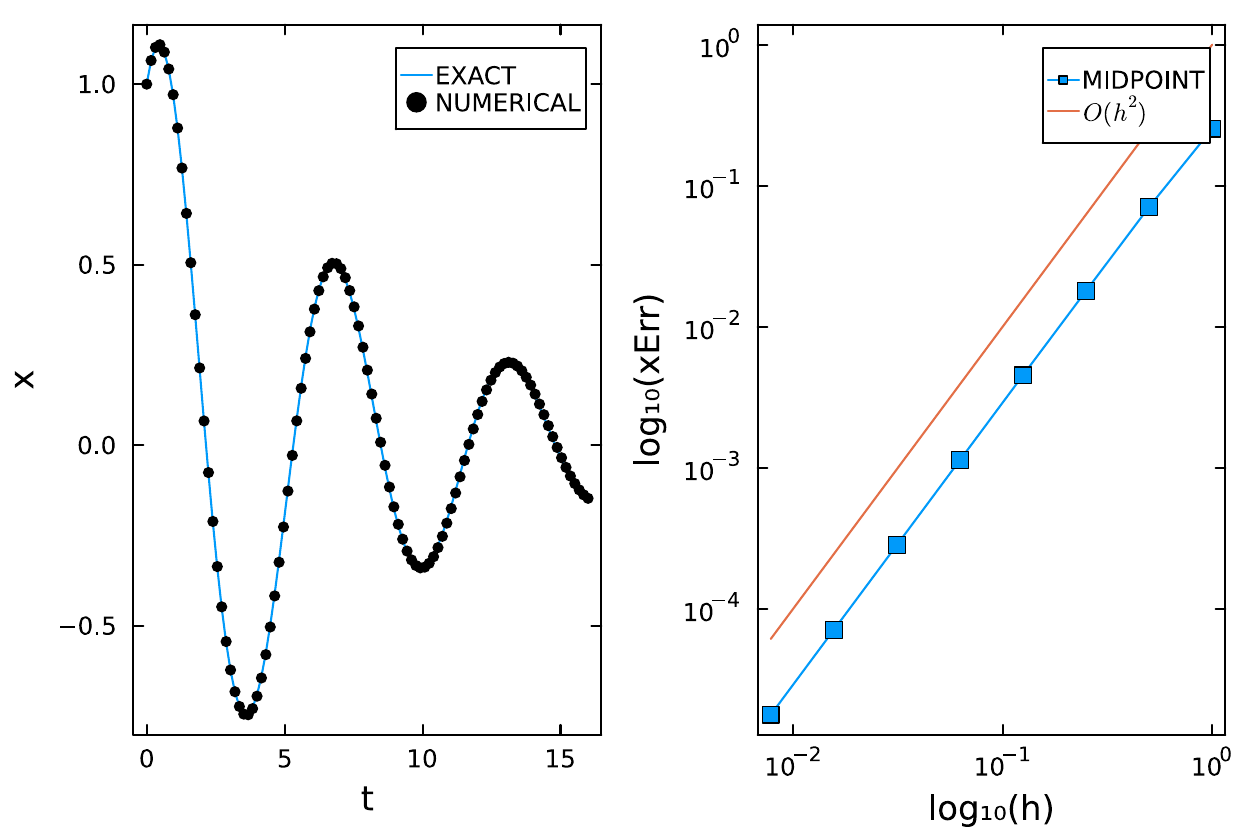}
\caption{1D damped oscillator \eqref{eq:1DdampedOsci}. \textit{Left:} Numerical trajectory. \textit{Right:} The accuracy of FVI with MIDCQ, Algorithm \ref{alg:FractionalAlgorithm2}
 with $t \in [0, 16]$ and $N = 2^i,\ i = 4, \ldots, 11.$}
\label{fig:midpoint-covergence-integer}
\end{figure}

\begin{figure}[htbp]
\centering
\includegraphics[width=.6\textwidth]{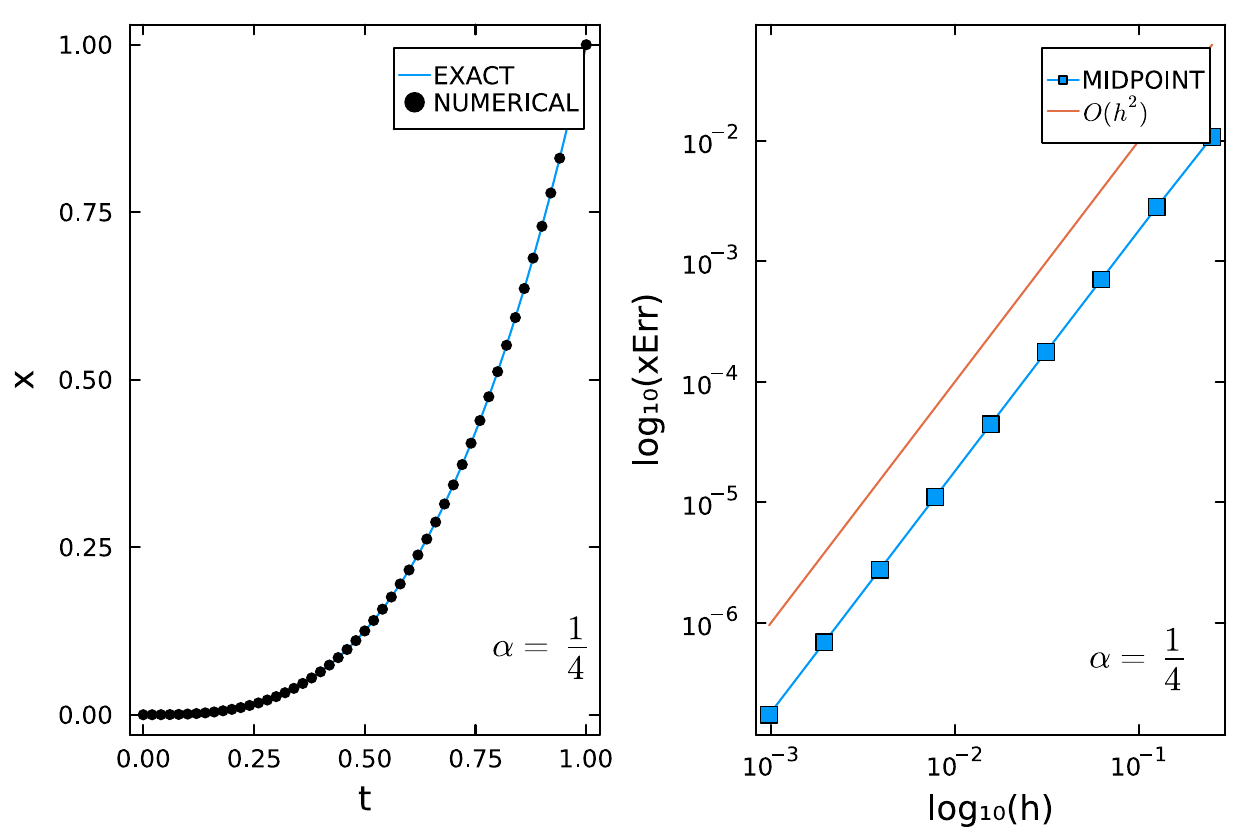} \caption{Fractional Bagley–Torvik equation \eqref{eq:Torfik} $(\alpha=1/4)$. \textit{Left:} Numerical trajectory. \textit{Right:} The accuracy of FVI with MIDCQ, Algorithm \ref{alg:FractionalAlgorithm2}
 with $t \in [0, 1]$ and $N = 2^i,\ i = 4, \ldots, 11.$}
\label{fig:midpoint-covergence-noninteger}
\end{figure}

\section{Conclusion}\label{conclusion}

In this paper, we have proposed a discrete setting of the restricted Hamilton's principle using two different approaches for the continuous actions \eqref{FracAction}. One for conservative part $\Lc$\cite{HaLe13,MaWe,SinaSaake} and the other for the fractional part $\Lf$ by means of a convolution quadrature based on Lobatto IIIC Runge-Kutta methods \cite{LuOs,BaLu,BaLo}. In addition, we developed a midpoint-based fractional variational integrator, which achieves second-order accuracy.

Unlike the backward-difference-based method \cite{HaJiOb}, which is limited to second-order accuracy due to saturation effects, the Runge–Kutta–based FVI avoids limitations and can achieve higher-order accuracy. In the case of $\alpha=1/2$, the theoretical accuracy for Lobatto IIIC coincides with the result given by \cite{SinaVerm}. We also test our scheme for the fractional Bagley–Torvik equation \eqref{eq:Torfik}.

While Lagrangian variational error analysis \cite{MaWe} indicates that the order of variational integrators depends on the accuracy of the discrete Lagrangian, 
a full theoretical analysis of the variational error in the fractional setting remains to be developed. This might not be trivial, since the Lagrangian depends on more than just the tangent bundle variables.

Future work will focus on establishing a rigorous fractional variational error analysis, determining the theoretical convergence order for fractional Bagley–Torvik equation, and developing fractional variational integrators based on Gauss-type RKCQ.

\printbibliography
\end{document}